\documentclass{amsart}
\usepackage{amssymb,latexsym}
\usepackage{amsmath}
\usepackage[dvipdfm]{graphicx}
\usepackage{enumerate}
\newenvironment{enumeratei}{\begin{enumerate}[\upshape (i)]}%
                            {\end{enumerate}}
   %produces (i), (ii), etc. Cross-reference with \eqref.
%  

% Coloring: 
 \usepackage{color}
 \def\piros#1{#1}
 \def\vajon#1{}
 \def\sajat#1{}

%
%

%                    Formatting instructions:
\theoremstyle{plain}
 \newtheorem{theorem}{Theorem}[section]
 \newtheorem{lemma}[theorem]{Lemma}
 \newtheorem{proposition}[theorem]{Proposition}
 \newtheorem{corollary}[theorem]{Corollary}
\theoremstyle{definition}
  
  \newtheorem{notation}[theorem]{Notation}
  
  \newtheorem{example}[theorem]{Example}
  
%

%                   Numbering

%                Macros for formulas
%
\renewcommand \epsilon{\varepsilon}
\newcommand \cnvclose [2]{\textup{Cnv}_{#1}( #2 )}
\newcommand \tline[3]{\ell_{#1}(#2,#3)}
\newcommand \lsegmnt[5] {[#4,#5]_{\tline{#1}{#2}{#3}}}
\newcommand \keplet[4] {\Phi^{\opset(#1)}_{#2}(#3;#4)}
\newcommand \ees {\mathrel{\&}}
\newcommand\opint{I^{o}}
\newcommand\boopint{I^{{\bullet}}}

\newcommand\opset{\underline{I}^{o}}
\newcommand\boopset{\underline{I}^{{\bullet}}}
\newcommand\gap{\kern 0.5 pt}
\newcommand\gapt{\kern 1.5 pt}
\newcommand\moop[1] {\underline{#1}}
\newcommand\mop[1] {\gap{\underline {\gap #1 }}\gap}
\newcommand\hop{\mop{h}}
\newcommand\hoop{\moop{h}}
\newcommand\pop{\mop{p}}
\newcommand\poop{\moop{p}}

\newcommand\qop{\mop{q}}
\newcommand\qoop{\moop{q}}
\newcommand\rop{\mop{r}}
\newcommand\sop{\mop{s}}
\newcommand \aspace [2] {{\textup{Aff}_{#1}({#2})}}
\newcommand \adim [2] {\textup{dim}^{\scriptscriptstyle{\textup{aff}}}_{#1}(#2)}
\newcommand \relratpart [2] {\textup{rr}_{#1}(#2)}
\newcommand \rrps[1] {\relratpart{\vec a}{#1}}
\newcommand \rrpv[1] {\relratpart{\vec a'}{#1}}
\newcommand \topclose [2] {[#2]_{#1}^{\scriptscriptstyle{\textup{top}}}}

\newcommand\pontf[1] {{#1}^{\kern-0.6pt \ast}}
\newcommand\restrict[2]{#1\rceil_{#2}}
\newcommand \distance [2] {\textup{dist}(#1,#2)}
\newcommand \diam [1] {\textup{diam}(#1)}
\newcommand \aspan [2] {\textup{Span}_{#1}^{ \scriptscriptstyle{\textup{aff}}}(#2)}

\newcommand \toT [1] {\mathrel{\to_{(#1,\opset(T))}} }

\newcommand \ttau {{\pmb{\tau}}}

\newcommand\fsub[1]{\langle #1\rangle_{\kern-1pt\textup{field}}}

\newcommand\vspan [2]{{}_{#1}\kern-1.2pt\langle #2\rangle }

\newcommand\vconon[1] {\mathord{\sim}\kern-2pt{{\ast}\atop{\kern-1.5pt #1}}}

%

 %idempotence
 %entropic
 %entropic
\newcommand \set[1] {\{#1\}}
\renewcommand\phi{\varphi}

\newcommand\iffform[1] {\mathrel{\mathord{\iff}^{\kern-3pt\eqref{#1}}}}
\newcommand\iffforms[2] {\mathrel{\mathord{\iff}^{\kern-3pt\eqref{#1},\eqref{#2}}}}
\newcommand\iffindhyp {\mathrel{\mathord{\iff}^{\kern-3pt ih}}}
\newcommand\eqindhyp {\mathrel{\mathord{=}^{\kern0pt ih}}}
\newcommand\concat{\cup^{\kern-5pt\ast}}
\newcommand\Knclos[3]{\tilde K^{(#1)}_{#2}\kern -1pt (#3)}
\newcommand\Kfclos[2]{K^{(1)}_{#1}\kern -1pt (#2)}
\newcommand\Kffclos[2]{K^{(1)}_{#1}\kern -1pt \bigl(#2\bigr)}
\newcommand\closure[2]{K^{\kern-0.0pt\infty}_{#1}\kern-0.7pt(#2)}
\newcommand\fullclosure[1]{K^{\kern-0.0pt\infty}_{\textup{all}}\kern-0.7pt(#1)}
\newcommand\fullbigclosure[1]{K^{\kern-0.0pt\infty}_{\textup{all}}\kern-0.7pt\bigl(#1\bigr)}

\renewcommand\rho{\varrho}

\newcommand\hlcub[2]{g^{(#1)\kern -1pt L}_{#2}}
\newcommand\hrcub[2]{g^{(#1)\kern -1pt R}_{#2}}
\newcommand\hwcub[3]{g^{(#2)\kern -1pt #1}_{#3}}

\newcommand\plcub[2]{\hat g^{(#1)\kern -1pt L}_{#2}}
\newcommand\prcub[2]{\hat g^{(#1)\kern -1pt R}_{#2}}
\newcommand\pwcub[3]{\hat g^{(#2)\kern -1pt #1}_{#3}}

\newcommand\lrtwo{\mathbf 2_{\kern -1.6pt {\scriptscriptstyle R}}^{\kern-1pt {\scriptscriptstyle L}}}

\begin{document}

\title
{A dyadic view of rational convex sets}
\author[G.\ Cz\'edli]{G\'abor Cz\'edli}
\email{czedli@math.u-szeged.hu}
\urladdr{http://www.math.u-szeged.hu/$\sim$czedli/}
\address{University of Szeged\\Bolyai Institute\\Szeged,
Aradi v\'ertan\'uk tere 1\\HUNGARY 6720}

%% Second author (Note: The order of the items here is important!)
\author[M.\ Mar\'oti]{Mikl\'os Mar\'oti}
\email{mmaroti@math.u-szeged.hu}
\urladdr{http://www.math.u-szeged.hu/$\sim$mmaroti/}
\address{University of Szeged\\Bolyai Institute\\Szeged,
Aradi v\'ertan\'uk tere 1\\HUNGARY 6720}

%% Third author (Note: The order of the items here is important!)
\author[{A.\,B.\ Romanowska}]{{A.\, B.\ Romanowska}}
\email{aroman@mini.pw.edu.pl} 
\urladdr{http://www.mini.pw.edu.pl/$\sim$aroman/}
\address{Faculty of Mathematics and Information Sciences, Warsaw University
of Technology, 00-661 Warsaw, Poland}

%% Thanks
\thanks{This research was supported by the NFSR of Hungary (OTKA), grant numbers   K77432 and K83219,
\piros{and by the Warsaw University of Technology under
grant number 504G/1120/0054/000}. }

\subjclass[2000]{06C10}
\keywords{convex set, mode, barycentric algebra, commutative medial groupoid, entropic groupoid, entropic algebra, dyadic number}

\date{March 22, 2012}

\begin{abstract} Let $F$ be a subfield of the field $\mathbb R$ of real numbers. Equipped with the binary arithmetic mean operation, each convex subset $C$ of $F^n$ becomes a commutative binary mode, also 
\piros{called idempotent commutative medial (or entropic)
groupoid.} Let $C$ and $C'$ be convex subsets of $F^n$. Assume that they are of the same dimension \piros{and at least one of them is bounded}, or $F$ is the field of all rational numbers. We prove that the corresponding \piros{idempotent} commutative medial groupoids are isomorphic if{f} the affine space $F^n$ over $F$ has an automorphism that maps \piros{$C$} onto \piros{$C'$}. 
We also prove a more general statement for the case when $C,C'\subseteq F^n$ are considered barycentric algebras over a unital subring of $F$  that is distinct from the ring of integers. 
\piros{A related result, for a subring of $\mathbb R$ instead of a subfield $F$, is given in \cite{rczgaroman2}.}
\sajat{Analogous  results for line segments and  polygons over the ring of dyadic numbers were previously proved and intensively used by Matczak, Romanowska and Smith~\cite{MatRomSm:dyadpolyg}.}  
\end{abstract}

\maketitle

\section{Introduction and motivation}\label{sec:intro}
\vajon{
The present paper intends to be self-contained modulo any standard book on Universal Algebra, see the easy-to-reach Burris and Sankappanavar~\cite{rBurSan} for example.} 
Let $F$ be a subfield of the field $\mathbb R$ of real numbers. Equipped with the arithmetic mean operation $(x,y)\mapsto (x+y)/2$, denoted by $\hoop$ (coming from ``half''),  
$F^n$ becomes a groupoid $(F^n,\hoop)$. This groupoid is  idempotent, commutative, medial, and cancellative. In Polish notation, which we use in the paper, these properties mean that, for arbitrary $x,y,z,t\in F^n$,   $xx\hop=x$ (idempotence), $xy\hop=yx\hop$ (commutativity),
$xy\hop\gapt zt\hop\gapt\hop=xz\hop\gapt yt\hop\gapt\hop$ (mediality, \piros{which is a particular case of entropicity}), and $xy\hop=xz\hop$ implies $y=z$ (cancellativity). These groupoids without assuming cancellativity are also called  
\emph{\piros{commutative binary} modes}  \piros{or \emph{CB-modes},}   and they \piros{were} studied in, say,  \cite{MR04}   and \cite{RS91} and \cite{RS02}, and Je\v{z}ek and Kepka~\cite{JK283}.

\piros{Let $C$ be a nonempty subset of $F^n$.}
If there is a convex subset $D$ of the  Euclidean space $\mathbb R^n$ in the usual sense such that $C=D\cap F^n$, then $C$ will be called a \emph{geometric convex subset} of $F^n$. 
\piros{We also say that $C$ is a \emph{geometric convex set over $F$}.}
Later we will give an ``internal'' definition that does not refer to $\mathbb R$. 
\sajat{The most important particular case is when $F$ equals $\mathbb Q$, the field of rational numbers.} 
\piros{Note that $C$ above is simply called a \emph{convex subset} in 
Romanowska and  Smith~\cite{RS02}; however, the adjective ``geometric'' becomes important soon in a more general situation. For convenience, the empty set will not be called a geometric convex set.}

Our initial problem is to characterize those pairs $(C_1,C_2)$ of geometric convex subsets of $F^n$ for which $(C_1,\hoop)$ and $(C_2,\hoop)$ are isomorphic groupoids. In the particular case \piros{when} $F=\mathbb Q$\piros{,} loosely speaking we are interested in what we can see from the ``rational world'' $\mathbb Q^n$ if the only thing we can percept is whether a point equals the arithmetic mean of two other points.
%

%
%\piros{Notice that \emph{dyadic convex subsets} of $\mathbb D^n$, 
%where  $\mathbb D=\set{x2^{k}:x,k\in \mathbb Z}$ is the ring of \emph{rational dyadic numbers}, were studied in Matczak, Romanowska and Smith~\cite{MatRomSm:dyadpolyg}.}

Similar questions \piros{were} studied for some particular geometric convex subsets of $\mathbb D^2$,
where  $\mathbb D=\set{x2^{k}:x,k\in \mathbb Z}$ is the ring of \emph{\piros{rational} dyadic numbers}. 
Namely, the isomorphism problem of line segments and  \piros{polygons} of the \piros{rational} dyadic plane $\mathbb D^2$ \piros{were} studied in 
Matczak, Romanowska and Smith~\cite{MatRomSm:dyadpolyg}. 
Another problem of deciding whether $(C_1,\hoop)$ is isomorphic to $(C_2,\hoop)$ \piros{is} considered in 
%Cz\'edli and Romanowska~
\cite[Ex.\ \piros{2.6}]{rczgaromanI}\piros{, and  \cite{rczgaroman2} also considers a related isomorphism problem.} 

The isomorphism problem even for intervals of the dyadic line $\mathbb D$ is not so evident as one may expect. This explains why our convex sets in the main result, Theorem~\ref{thmmain},  are assumed to have some further properties, including that they are \emph{geometric} \piros{over a subfield of $\mathbb R$}. Further comments on the main result will be given in Section~\ref{SecExCom}.

\section{Barycentric algebras over unital subrings of $\mathbb R$ and the results}
\begin{notation}\label{notattests}
The general assumption and notation in the paper are \piros{the following.} 
\begin{enumeratei}
\item $\mathbb N=\set{1,2,\ldots}$, $\mathbb N_0=\set{0,1,2,\ldots}$, $\mathbb Z$ is the ring of integers, $\mathbb Q$ is the field of rational numbers, $\mathbb R$ is the field of real numbers, and \piros{$n\in \mathbb N$}.
\item $T$ is a subring of $\mathbb R$ such that $1\in T$ and $T\cap\mathbb Q\neq \mathbb Z$ (that is, $\mathbb Z\subset T\cap \piros{\mathbb Q}$). 
\item $K$ is the subfield of $\mathbb R$ generated by $T$, and $F$ is a subfield of $\mathbb R$ such that $T\subseteq F$. (Clearly, $T\subseteq K\subseteq F\subseteq \mathbb R$.)
\item The open and the closed unit intervals of $T$ are denoted by $\opint(T)=\{x\in T: 0< x < 1\}$ and $\boopint(T)=\{x\in T: 0\leq x\leq 1\}$, respectively; $\opint(F)$, $\boopint(\mathbb Q)$, etc.\ are particular cases. (Notice that $T$ can equal, say, $F$ and $F$ can equal $\mathbb R$, etc. Therefore, whatever we define for $T$ or $F$ in what follows, it will automatically make sense for $F$ or $\mathbb R$.)
\item With each $p\in \mathbb R$ we associate a binary operation symbol denoted by $\poop$. For $H\subseteq \mathbb R$, we let $\underline H:=\set{\poop: p\in H}$. However, we will write, say, \piros{$\opset(T)$} instead of $\underline{\opint(T)}$. For $x,y\in\mathbb R^n$, $xy\pop$ is defined to be $(1-p)x+p y$.
\end{enumeratei}
\end{notation}

If $p\in\opint(\mathbb R)$, then $\poop$ is called a \emph{barycentric operation} since $xy\pop$  gives the barycenter  of a two-body system with weight $(1-p)$ in the point $x$ and weight $p$ in the point $y$. For any $p,q$ in $\mathbb R$, the operations $\poop$ and $\qoop$ commute in $\mathbb R^n$, that is  $xy\pop\gapt zt\pop\gapt\qop=xz\qop\gapt yt\qop\gapt\pop$ holds for all $x,y,z,t\in\mathbb R$. \piros{This property is called the \emph{entropic law},  see \cite{RS02}}. \piros{As a particular case, the \emph{medial law} (for $\hoop$) means that $\hoop$ commutes with itself}. 
Although the present \piros{paper} is more or less  self-contained, for standard general algebraic concepts the reader may want to see Burris and Sankappanavar~\cite{rBurSan}. \piros{He may also want to see Romanowska and Smith~\cite{RS02} for additional information on modes and barycentric algebras.}
The visual meaning of  barycentric operations is revealed by the following lemma; the obvious proof will be omitted. The Euclidean distance  $\bigl((x_1-y_1)^2+\cdots+ (x_n-y_n)^2\bigr)^{1/2}$ of  $x,y\in\mathbb R^n$ will be denoted by $\distance xy$. 

\begin{lemma}\label{barireveAl} Let $y$ and $x$ be distinct point\piros{s} in $\mathbb R^n$, see Figure~\ref{figone}. Then for each $b$ belonging to the open line segment connecting $y$ and $x$ and for each $p\in\opint(R)$, 
\begin{align*}
b=yx\pop\iff x=yb\mop{1/p}\iff y=bx\,\mop{p/(p-1)}\text.
\end{align*}
Moreover, $\distance yx =\distance yb / p$. 
\end{lemma}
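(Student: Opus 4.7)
The plan is to prove this lemma by direct unfolding of the definition $uv\poop := (1-p)u+pv$ given in Notation~\ref{notattests}(v); indeed, the author labels the proof ``obvious'' for good reason. First I would rewrite the first equation $b = yx\pop$ as the affine identity $b = (1-p)y + px$ in $\mathbb R^n$, and view the other two equations in the same way.

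For the equivalence with $x = yb\mop{1/p}$, I would simply solve the first identity for $x$: since $p\in\opint(\mathbb R)$ gives $p\neq 0$, dividing by $p$ yields $x = (1-1/p)y + (1/p)b$, which is exactly $yb\mop{1/p}$ by the defining formula. For the equivalence with $y = bx\mop{p/(p-1)}$, I would solve the same identity for $y$: since $1-p \neq 0$ one gets $y = (1-p)^{-1}b - p(1-p)^{-1}x$, and a one-line check ($1 - p/(p-1) = 1/(1-p)$ and $p/(p-1) = -p/(1-p)$) matches this with $bx\mop{p/(p-1)}$. All three algebraic manipulations are reversible, so the three equations are equivalent.

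The ``moreover'' clause is then immediate: from $b = (1-p)y + px$ one reads off $b - y = p(x-y)$, and taking Euclidean norms (with $p>0$) gives $\distance{y}{b} = p\cdot\distance{y}{x}$, i.e.\ $\distance{y}{x} = \distance{y}{b}/p$.

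There is no genuine obstacle in this lemma; the only mild point to note is the role of the hypothesis that $b$ lies on the \emph{open} segment between $y$ and $x$, which is what guarantees that some parameter $p \in \opint(\mathbb R)$ actually realizes $b = yx\pop$, so that the stated equivalences are not vacuous. Thus the whole proof amounts to three symbolic solves of a scalar affine equation, which is exactly why the author chose to omit it.
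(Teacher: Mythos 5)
Your proof is correct and is exactly the direct computation the paper has in mind when it declares the proof ``obvious'' and omits it: unfolding $uv\poop=(1-p)u+pv$, solving the affine identity $b=(1-p)y+px$ for $x$ and for $y$, and reading off $b-y=p(x-y)$ for the distance claim. Nothing further is needed.
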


\begin{figure}
\centerline
{\includegraphics[scale=1.0]{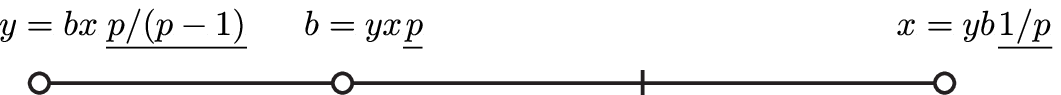}}
\caption{Illustrating Lemma~\ref{barireveAl}  in case  $p=1/3$}\label{figone}
\end{figure}

\piros{The algebra $(\mathbb R^n; \opset(T))$ and all of its subalgebras are particular 
members of the variety of 
barycentric algebras over $T$, or \emph{$T$-barycentric algebras} for short.
(However, as opposed to previous papers and monographs,  $T$ is no longer assumed to be a field.) These particular  
$T$-barycentric algebras that we consider} are \emph{modes}, that is, idempotent algebras in which any two operations (and therefore any two term functions) commute. 
Modes and barycentric algebras have intensively been studied in the monographs \cite{RS85} and \cite{RS02},  see also
the extensive bibliography in \cite{rczgaromanI}.
It is well-known, see \cite{RS02}, that  $(F^n; \hoop)$ is term-equivalent to  $(F^n; \opset(\mathbb D))$, whence the same holds for its subalgebras. This allows us to translate the initial problem to the language of $\mathbb D$-barycentric algebras, and then it is natural  to extend it to $T$-barycentric algebras.

The subalgebras of  $(\mathbb R^n; \opset(T))$ will be called \emph{$T$-convex subsets} of $\mathbb R^n$. \piros{The empty set is not considered to be $T$-convex.  (Notice that the adjective ``$T$-convex'' in \cite{rczgaroman2} is used only for subsets of $T^n$.)}
For $\emptyset\neq X\subseteq \mathbb R^n$, the \emph{$T$-convex hull} of $X$, denoted by $\cnvclose TX$, is the subalgebra  generated by $X$ in  $(\mathbb R^n; \opset(T))$. It is well-known, see \cite{RS02}, that  $\boopset(T)$ is exactly the set of binary term functions of $(F^n; \opset(T))$. Moreover, each $(1+k)$-ary term function of $(F^n; \opset(T))$ agrees with a function $\ttau \colon (x_{\piros{0}},\ldots,x_k)\mapsto \xi_0x_0+\cdots+\xi_kx_k$ where $\xi_0,\ldots,\xi_k\in \boopint(T)$ such that $\xi_0+\cdots+\xi_k=1$. 
This implies that, for any $\emptyset\neq X\subseteq F^n$, 
\begin{equation}\label{HoWgEnre}
\cnvclose TX=\set{x_0\cdots x_k\,\ttau: k\in\mathbb N_0,\text{ } x_0,\ldots,x_k\in X\text{ and }\ttau\text{ is as above}}\text.
\end{equation} 
The full idempotent reduct of the  $T$-module $_TF^n$ is a so-called affine module \piros{over $T$}; we call it an   \emph{affine $T$-module} and denote it by $\aspace T{F^n}$. When $T$ is understood or irrelevant, we often write 
$F^n$ instead of $\aspace T{F^n}$.
 In the particular case $T=F$, the \piros{affine $F$-module}   
 $\aspace F{F^n}$ is an $n$-dimensional \emph{\piros{affine $F$-space}}, see more (well-known) details later.

The mere assumption that $C\subseteq F^n$ is a $T$-convex subset would rarely be sufficient \piros{for} our purposes, \piros{see also \cite{rczgaroman2} for a similar analysis}.  There are three reasonable ways to make a stronger assumption. 

Firstly, we can assume that $C$ is an \piros{\emph{$F$-convex subset}, that is, a subalgebra of $(F^n,\opset(F)$).}

Secondly, we can assume that $C$ is the intersection of $F^n$ with an $\mathbb R$-convex subset  of $\mathbb R^n$. (That is, with a convex subset of $\mathbb R^n$ in the usual geometric meaning.) In this case we say that $C$ is a \emph{\piros{geometric convex subset}} of $F^n$. \piros{In other words, we say that $C$ is a \emph{geometric convex set over $F$.}  
%Since $T$ is always a subring of $F$, a geometric convex subset of $F^n$ is always $T$-convex. 
Notice that
the geometric convexity of $C$  depends on $F$, so we can use this concept only for subsets of  $F^n$. (Note also that \cite{rczgaroman2} defines geometric convexity even when   $C\subseteq T^n$ and it does it in a different way, which is equivalent to our approach for the case $T=F$.)}

To define the third variant of convexity, let $a,b\in F^n$ with $a\neq b$. By the \emph{$T$-line} generated by $\set{a,b}$ we mean 
\piros{$\tline Tab:=  \cnvclose T {a,b}$. By \eqref{HoWgEnre},} we have that $\tline Tab =   \set{ab\pop: p\in \piros T}$.
It follows from cancellativity that for each $x\in\tline Tab$, there is exactly one $p\in T$ such that $x=ab\pop$. Let $c,d\in\tline Tab$. Then there are unique $p,r\in T$ such that $c=ab\pop$ and $d=ab\rop$. For $s\in T$, we say that $s$ is between $p$ and $r$ if{f} $p\leq s\leq r$ or $r\leq s\leq p$. Then 
\[\lsegmnt Tabcd:=\set{ab\sop: s\text{ is between }p\text{ and }r}
\]
is called a \emph{$T$-segment} of the $T$-line $\tline Tab$ with \emph{endpoints}  $c$ and $d$. \piros{As} opposed to the case when $T$ happens to be a field, a $T$-segment is usually  \emph{not} determined by its endpoints. For example, 0 and 3 are the endpoints of the $\mathbb D$-segment $\lsegmnt {\mathbb D}0103$ and also of the $\mathbb D$-segment $\lsegmnt {\mathbb D}0303$ in $\mathbb Q^1$, but
 $1\in \lsegmnt {\mathbb D}0103\setminus \lsegmnt {\mathbb D}0303$ indicates that
these $\mathbb D$-segments are distinct.
Now, a \piros{nonempty} subset $\piros C$ of $F^n$ will be called \piros{\emph{$T$-segment convex}} if for all $c,d\in C$ and all $T$-segments $S$ with endpoints $c$ and $d$, $S\subseteq C$. {This}  definition, %\piros{which helps in formulating the next statement,} 
is quite ``internal'' since it does not refer to outer objects like $\mathbb R$ \piros{(besides that $T$ is a subring of $\mathbb R$)}.
The relationship among the three concepts \piros{above}  is clarified by the following statement, to be proved later. \piros{A related treatment for analogous concepts is given in \cite{rczgaroman2}.}

\begin{proposition}\label{vknxLl} Let $C$ be a \piros{nonempty}  subset of $F^n$.
\begin{enumeratei}
\item\label{vknxLli} \piros{If $C$ is $T$-segment convex, then it is $T$-convex.}
\item\label{vknxLlii} $C$ is \piros{a}  geometric \piros{convex subset of $F^n$} if{f} it is $F$-convex.
\item\label{vknxLliii} If $C$ is $F$-convex, then it is \piros{$T$-segment convex}.
\item\label{vknxLliv} If \piros{$T$ generates $F$ $($that is, if $F=K)$,} then $C$ is $F$-convex if{f} it is $T$-segment convex.
\end{enumeratei}
\end{proposition}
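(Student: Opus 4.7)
I handle the four parts in turn, using throughout the description of term functions of $(F^n;\opset(T))$ recalled after \eqref{HoWgEnre}. For (i), given $c,d\in C$ and $p\in\opint(T)$, the $T$-segment $\lsegmnt Tcdcd$ has endpoints $c,d$ and contains $cd\pop$, so the $T$-segment convexity of $C$ immediately gives $cd\pop\in C$.

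Part (ii) is the main step. The direction ``geometric $\Rightarrow$ $F$-convex'' is immediate, since $\opint(F)\subseteq\opint(\mathbb R)$ and an $\mathbb R$-convex subset of $\mathbb R^n$ is closed under $\poop$ for every $p\in\opint(\mathbb R)$. For the converse, let $D\subseteq\mathbb R^n$ be the $\mathbb R$-convex hull of $C$; I need $D\cap F^n\subseteq C$. Given $x\in D\cap F^n$, fix an $\mathbb R$-convex representation $x=\sum_{i=0}^k\lambda_i x_i$ with $x_i\in C$, $\lambda_i>0$, and $\sum_{i=0}^k\lambda_i=1$ (after dropping any zero coefficients). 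The affine subspace
\[
V=\set{(\mu_0,\ldots,\mu_k)\in\mathbb R^{k+1}:\sum_{i=0}^k\mu_i x_i=x\text{ and }\sum_{i=0}^k\mu_i=1}
\]
is defined by linear equations over $F$. Since $\mathbb Q\subseteq F$, the field $F$ is dense in $\mathbb R$, hence $V\cap F^{k+1}$ is dense in $V$. Choosing a point of $V\cap F^{k+1}$ close enough to $(\lambda_0,\ldots,\lambda_k)$ preserves strict positivity, exhibiting $x$ as an $F$-convex combination of elements of $C$; thus $x\in\cnvclose F C=C$.

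For part (iii), assume $C$ is $F$-convex and let $S=\lsegmnt Tabcd$ with $c=ab\pop$ and $d=ab\rop$ in $C$. A direct computation gives, for every $s\in T$ between $p$ and $r$ (the degenerate case $p=r$ forces $c=d$ and $S=\set c$),
\[
ab\sop=\frac{r-s}{r-p}\,c+\frac{s-p}{r-p}\,d,
\]
a combination with nonnegative $F$-coefficients summing to $1$; so $ab\sop\in C$ by $F$-convexity, whence $S\subseteq C$.

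For part (iv), by (iii) it suffices to show that $T$-segment convexity implies $F$-convexity under the hypothesis $F=K$. Let $c,d\in C$ with $c\neq d$ and $p\in\opint(F)$. Since $K$ is the field of fractions of $T$ in $\mathbb R$, write $p=\alpha/\beta$ with $\alpha,\beta\in T$ and $0<\alpha<\beta$. Setting $a:=c$ and $b:=c+\beta^{-1}(d-c)\in F^n$, one computes $c=ab\,\mop 0$, $d=ab\,\mop{\beta}$, and $cd\pop=ab\,\mop{\alpha}$; since $\alpha\in T$ lies between $0$ and $\beta$, the point $cd\pop$ belongs to $\lsegmnt Tabcd\subseteq C$. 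The main obstacle is in (ii): one must combine the $F$-rationality of the linear system with the density of $F$ in $\mathbb R$ (provided by $\mathbb Q\subseteq F$) to secure strictly positive $F$-rational coefficients for $x$.
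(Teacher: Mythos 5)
Your proof is correct, and in the two nontrivial parts it takes a genuinely different route from the paper's. For part \eqref{vknxLlii}, the paper proves the stronger Lemma~\ref{iHezTcXc}: via Carath\'eodory's theorem it passes to an affine $\mathbb R$-independent subset $\set{a_0,\ldots,a_k}$ of $X$, for which the barycentric coordinates of $d$ are the \emph{unique} solution of a linear system over $F$ and hence lie in $F$. You instead perturb an arbitrary real representation inside the affine solution space $V$, using that the $F$-points of an affine subspace defined over $F$ are dense in it (since $\mathbb Q\subseteq F$) and that strict positivity is an open condition; this is sound, though the density claim deserves one more line (parametrize $V$ as $v_0+\sum t_jw_j$ with $v_0,w_j\in F^{k+1}$ by row reduction over $F$, noting that solvability over $\mathbb R$ implies solvability over $F$ by comparing ranks). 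The trade-off is that the paper's lemma delivers extra information --- the independence of the $a_i$ and the identity $\cnvclose FX=F^n\cap\cnvclose LX$ --- which it reuses in its proofs of part \eqref{vknxLliv} and of Theorem~\ref{thmmain}, whereas your argument is local to \eqref{vknxLlii} and avoids Carath\'eodory altogether. For part \eqref{vknxLliv}, the paper shows $\cnvclose KC\subseteq C$ by induction on the arity $k$ of a minimal representing convex combination, the step $k>1$ passing through the boundary of a real simplex; you observe that $F$-convexity is just closure under the binary basic operations $\poop$ with $p\in\opint(K)$, which collapses the entire induction to the paper's $k=1$ case --- indeed your auxiliary point $b=c+\beta^{-1}(d-c)$ is precisely the paper's $z=a_0a_1\mop{1/v}$. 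That is a genuine simplification. Parts \eqref{vknxLli} and \eqref{vknxLliii}, which the paper dismisses as obvious, you verify by exactly the computations one would expect.
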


\piros{Besides \eqref{vknxLli}, each of the conditions \eqref{vknxLlii}--\eqref{vknxLliv} above clearly} implies $T$-convexity. 
\sajat{Hence, instead of speaking of ``$F$-convex $T$-convex'' subsets of $F^n$, we will simply say ``$F$-convex subsets''. }Remember that  $\mathbb Z\subset T\cap \mathbb Q$ means that $\mathbb Z\neq T\cap \mathbb Q$ and $\mathbb Z\subseteq T\cap \mathbb Q$.
If  $X\subseteq F^n$ and 
$\set{\distance xy: x,y\in X}$ is a bounded subset of $\mathbb R$, then  $X$ is called a \emph{bounded} set. 
For $X\subseteq \mathbb R^n$, \piros{the affine \emph{$F$-subspace spanned}} by $X$ will be denoted by $\aspan FX$.
\piros{As usual,} by the \piros{affine \emph{$F$-dimension}} of $X$, denoted by $\adim FX$, we mean the \piros{affine $F$-dimension} of  $\aspan FX$.
We are now in the position to formulate the main result.

\begin{theorem}\label{thmmain} Assume that $n\in\mathbb N$, 
 $F$ is a subfield of $\mathbb R$,  $T$ is a subring of $F$, and $\mathbb Z\subset T\cap \mathbb Q$. Let $C$ and $C'$ be $F$-convex subsets  \piros{$($equivalently, geometric convex subsets$)$}  of $F^n$.  
%Assume also that  $F=\mathbb Q$, or   $C$  and $C'$ 
%\piros{have} the same \piros{affine $F$-dimension and at least one of them is bounded}.
Assume also that  
\begin{itemize}
\item[\piros{(a)}] $F=\mathbb Q$,
\end{itemize}
or
\begin{itemize}
\item[\piros{(b)}] $C$  and $C'$ 
\piros{have} the same \piros{affine $F$-dimension and at least one of them is bounded.}
\end{itemize}
Then the following three conditions are equivalent. 
\begin{enumeratei}
\item\label{imainitem} $(C,\opset(T))$ and $(C',\opset(T))$ are isomorphic $T$-barycentric algebras.
\item\label{iimainitem} The \piros{affine $F$-space} $\aspace F{F^n}$ has an automorphism $\psi$ such that $\psi(C)=C'$.
\item\label{iiimainitem} The \piros{affine real} space $\aspace{\mathbb R}{\mathbb R^n} $ has an automorphism $\psi$ such that $\psi(C)=C'$.
\end{enumeratei}
\end{theorem}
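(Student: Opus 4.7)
The implications (\ref{iimainitem}) $\Rightarrow$ (\ref{iiimainitem}) and (\ref{iiimainitem}) $\Rightarrow$ (\ref{imainitem}) are straightforward. The first follows by viewing an $F$-affine automorphism $x\mapsto Ax+b$ of $F^n$ (with entries in $F\subseteq\mathbb R$) as an $\mathbb R$-affine automorphism of $\mathbb R^n$, which still carries $C$ to $C'$ since both are subsets of $F^n$. The second follows because any $\mathbb R$-affine automorphism of $\mathbb R^n$ commutes with every $\poop$ for $p\in\mathbb R$, hence in particular for $p\in T$, so its restriction $C\to C'$ is automatically a $T$-barycentric isomorphism. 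The essential content is (\ref{imainitem}) $\Rightarrow$ (\ref{iimainitem}), and only this direction uses the hypotheses (a) or (b).

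Given a $T$-barycentric isomorphism $\phi\colon C\to C'$, the plan is to build an $F$-affine automorphism $\psi$ of $F^n$ with $\psi|_C=\phi$. First I would verify $\adim FC=\adim F{C'}=:k$ (given under (b), and derivable from $\phi$ under (a)), pick an $F$-affine frame $a_0,\ldots,a_k\in C$, and set $a_i'=\phi(a_i)\in C'$. Using $\phi(C)=C'$ and $\adim F{C'}=k$ one checks that $a_0',\ldots,a_k'$ are $F$-affinely independent. Let $\psi_0\colon\aspan FC\to\aspan F{C'}$ be the unique $F$-affine isomorphism with $\psi_0(a_i)=a_i'$; picking any $F$-linear isomorphism between complementary subspaces and combining it with $\psi_0$ produces an $F$-affine automorphism $\psi$ of $F^n$.

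The crux is to verify $\psi|_C=\phi$. By construction both maps coincide on the $T$-convex hull $\cnvclose T{a_0,\ldots,a_k}\subseteq C$, since they commute with every operation in $\opset(T)$ and agree on the generators. To extend the agreement to arbitrary $x\in C$, I would reduce to a one-dimensional statement: for each $x\in C$ and each $a_i$, the intersection $\tline F{a_i}x\cap C$ is $F$-convex (being the intersection of the $F$-convex set $\tline F{a_i}x$ with the $F$-convex set $C$, the latter by Proposition~\ref{vknxLl}(\ref{vknxLlii})), so it suffices to establish the theorem in dimension one. Propagating from the frame points through such one-dimensional slices then yields $\phi=\psi$ on all of $C$.

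The main obstacle is therefore the one-dimensional case: given two $F$-intervals $C,C'\subseteq F$ and a $T$-barycentric isomorphism $\phi\colon C\to C'$, one must show $\phi$ is $F$-affine. The hypothesis $\mathbb Z\subsetneq T\cap\mathbb Q$ guarantees that some $1/p_0$ lies in $T$ with $p_0$ prime, and iterating the resulting operations in $\opset(T)$ generates $T$-barycentric combinations whose weights form a dense subset of $F$. Under (a) with $F=\mathbb Q$, this density together with the description of $F$-convex subsets of $\mathbb Q$ pins $\phi$ down as $\mathbb Q$-affine. Under (b), boundedness of $C$ or $C'$ prevents unbounded stretching by $\phi$ and equal dimension prevents collapse, so density again forces $F$-affineness. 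Uniformly handling the subcases (open versus closed endpoints, case (a) versus case (b), bounded versus unbounded intervals) is where the bulk of the technical work concentrates.
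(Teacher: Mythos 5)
Your treatment of the two easy implications matches the paper's, but the hard direction \eqref{imainitem}~$\Rightarrow$~\eqref{iimainitem} has two genuine gaps. First, the proposed reduction to dimension one does not work as stated: you restrict $\phi$ to $\tline F{a_i}x\cap C$, but $\phi$ is only an abstract $\opset(T)$-isomorphism, so there is no reason its image of this slice lies on any line in $C'$ --- preservation of collinearity by $\phi$ is essentially the content of what must be proved, not a tool available at the outset. The paper never slices: it shows (Lemma~\ref{fMlaLMam}) that for any rational $\xi_0,\ldots,\xi_k$ summing to $1$ --- including negative ones and ones outside $T$ --- the relation $b=\xi_0a_0+\cdots+\xi_ka_k$ is expressible by an \emph{existential} formula $\keplet T{\xi_0,\ldots,\xi_k}{a_0,\ldots,a_k}b$ whose witnesses all lie in $C$ (this uses the $\mathbb Q$-convexity of $C$), hence is preserved by $\phi$; this forces $\phi$ to agree with the affine map $\psi$ on the whole relatively rational part $\rrps C$, which equals $C$ when $F=\mathbb Q$. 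Your observation that $\phi$ and $\psi$ agree on $\cnvclose T{a_0,\ldots,a_k}$ is much weaker: that hull only captures coefficients in $\boopint(T)$ and is in general a proper subset of $C$ (a simplex inside a disc, say), so it cannot substitute for the formula lemma.

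Second, in case (b) the step ``density forces $F$-affineness'' presupposes some continuity of $\phi$, which is not given; an abstract isomorphism could a priori be wildly discontinuous off the dense set, and ``boundedness prevents unbounded stretching'' does not address this. The paper's key device is the relation $\vec b\toT C y$, defined by the existence of $x_j\in C$ and $q_j\in\opint(T)$ with $q_j\le 1/j$ and $b_j=yx_j\mop{q_j}$: being purely algebraic it is transported by $\phi$, and for a \emph{bounded} set it implies metric convergence $b_j\to y$; combined with the construction of such an $\rrps C$-sequence converging to any given $y\in C$, this pins down $\phi(y)$. Your proposal contains neither this definable convergence nor any replacement for it. (A smaller but real gap: for $F\ne\mathbb Q$ it is not automatic that the $\phi$-image of an affine $F$-independent frame is affine $F$-independent; the paper needs the inductive construction \eqref{vanJoBzis} to produce a frame with this property.)
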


\begin{corollary}\label{coroLegy} If $C$ and $C'$ are 
%geometric dyadic convex 
\piros{geometric convex} subsets of $F^n$, then $(C,\hoop)\cong(C',\hoop)$ if{f} 
 \eqref{iimainitem} of Theorem~\ref{thmmain}
holds if{f}  \eqref{iiimainitem} of Theorem~\ref{thmmain} holds. Furthermore, if $D$ and $D'$ are \piros{isomorphic subalgebras of $(\mathbb Q^n,\hoop)$},
% such that $(D,\hoop)\cong(D',\hoop)$, 
then $D$ is \piros{a} geometric \piros{convex subset of $\mathbb Q^n$} if{f} \piros{so is} $D'$.
\end{corollary}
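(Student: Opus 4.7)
The first three-way equivalence should reduce immediately to Theorem~\ref{thmmain} via the term-equivalence of the groupoid $(F^n,\hoop)$ with the $\mathbb{D}$-barycentric algebra $(F^n,\opset(\mathbb{D}))$ already recorded in the paper. Since $\mathbb{Z}\subsetneq\mathbb{D}\cap\mathbb{Q}$ (because $1/2\in\mathbb{D}$), the hypothesis on $T$ in Theorem~\ref{thmmain} is met with $T=\mathbb{D}$; then $\hoop$-subalgebras of $F^n$ coincide with $\mathbb{D}$-barycentric subalgebras, and the two notions of isomorphism agree. Thus condition \eqref{imainitem} of Theorem~\ref{thmmain} (applied with $T=\mathbb{D}$) specialises to $(C,\hoop)\cong(C',\hoop)$, while \eqref{iimainitem} and \eqref{iiimainitem} are unchanged, and the theorem delivers the claimed equivalence.

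For the furthermore clause I would take $F=\mathbb{Q}$, so that hypothesis~(a) of Theorem~\ref{thmmain} is trivially in force, and let $\phi\colon D\to D'$ be the given isomorphism with $D$ geometric (i.e., $\mathbb{Q}$-)convex. Note that this setup is asymmetric and therefore stronger than Theorem~\ref{thmmain}(a) itself, because $D'$ is only assumed $\mathbb{D}$-convex a priori. The plan is to exhibit $\phi$ as the restriction to $D$ of a $\mathbb{Q}$-affine bijection $\psi\colon\aspan{\mathbb{Q}}{D}\to\aspan{\mathbb{Q}}{D'}$; once this is done, $D'=\phi(D)=\psi(D)$ will be the affine image of a $\mathbb{Q}$-convex set, hence itself $\mathbb{Q}$-convex and, by Proposition~\ref{vknxLl}\eqref{vknxLlii}, geometric convex. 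The converse implication will then follow by applying the same argument to $\phi^{-1}$.

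To establish that $\phi$ is $\mathbb{Q}$-affine on $D$, it will be enough to verify $\phi(xy\pop)=\phi(x)\phi(y)\pop$ for every $x,y\in D$ and every $p\in\opint(\mathbb{Q})$. Writing $p=a/b$, I would introduce the auxiliary points $z_j:=xy\,\mop{j/b}\in D$ for $0\le j\le b$; these lie in $D$ by $\mathbb{Q}$-convexity, and a short check shows they satisfy the midpoint identities $z_j=z_{j-1}z_{j+1}\hop$ for $1\le j\le b-1$. In $\aspace{\mathbb{Q}}{\mathbb{Q}^n}$, this linear recurrence with boundary values $z_0=x$ and $z_b=y$ has $z_j=xy\,\mop{j/b}$ as its unique solution. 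Applying $\phi$ transports both the identities and the boundary data to $\phi(z_0),\dots,\phi(z_b)\in D'\subseteq\mathbb{Q}^n$, and the same uniqueness then pins $\phi(z_j)=\phi(x)\phi(y)\,\mop{j/b}$; specialising to $j=a$ yields $\phi(xy\pop)=\phi(x)\phi(y)\pop\in D'$, so $D'$ is closed under every $\poop$ with $p\in\opint(\mathbb{Q})$. The main technical point I expect is precisely this reduction of an arbitrary rational barycentric operation to a finite system of $\hoop$-identities, which the midpoint recurrence handles uniformly in $p$.
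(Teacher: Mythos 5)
Your proposal is correct, and while the first part coincides with the paper's proof (both reduce to Theorem~\ref{thmmain} via the term equivalence of $(F^n,\hoop)$ and $(F^n,\opset(\mathbb D))$), your treatment of the ``furthermore'' clause takes a genuinely different route. The paper assumes $D'$ geometric (hence $\mathbb Q$-convex by Proposition~\ref{vknxLl}\eqref{vknxLlii}) and, for $a,b\in D$ and $q\in\opint(\mathbb Q)$, invokes Lemma~\ref{fMlaLMam}: $ab\qop\in D$ holds if{f} the existential formula $\keplet {\mathbb D}{1-q,\,q}{a,b}y$ is satisfiable in $(D,\opset(\mathbb D))$, if{f} it is satisfiable in $(D',\opset(\mathbb D))$, and the last condition holds by the $\mathbb Q$-convexity of $D'$. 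You instead encode the single operation $\poop$, $p=a/b$, directly: the points $z_j=xy\mop{j/b}$ satisfy the midpoint recurrence $z_j=z_{j-1}z_{j+1}\hop$, whose solution in $\mathbb Q^n$ with prescribed $z_0,z_b$ is the unique arithmetic progression between them, so any $\hoop$-isomorphism must carry $xy\pop$ to $\phi(x)\phi(y)\pop$; this correctly yields that $D'$ is closed under every $\poop$ with $p\in\opint(\mathbb Q)$. Both arguments rest on the same principle --- an isomorphism preserves existentially quantified systems of equations whose solution is unique in the ambient affine space --- and your recurrence is in essence the $k=1$, positive-coefficient instance of the formula constructed in the proof of Lemma~\ref{fMlaLMam}, specialized to the prime $2$. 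What your version buys is self-containedness (no negative coefficients, no choice of the smallest prime $p$ with $1/p\in T$); what the paper's buys is brevity, since the lemma is already in place for the proof of Theorem~\ref{thmmain}. One presentational quibble: your second paragraph promises a global $\mathbb Q$-affine bijection $\psi\colon\aspan{\mathbb Q}D\to\aspan{\mathbb Q}{D'}$ that you never construct; it is also unnecessary, since the closure of $D'$ under the operations $\poop$ established in your third paragraph already gives $\mathbb Q$-convexity and hence, by Proposition~\ref{vknxLl}\eqref{vknxLlii}, geometric convexity.
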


\section{Examples and comments}\label{SecExCom}
\piros{Before proving our results, we present four examples to illustrate and comment them.} 
The \piros{first example below} is a variant of \cite[Ex.\ 1.5]{rczgaromanI}. While \cite{rczgaromanI} is insufficient to handle it,  Theorem~\ref{thmmain} will apply easily. Remember that $h$ stands for $1/2$.

\begin{example}\label{exc:one} 
Let $C_i=\{(x,y)\in F^2: x^2\in\opint(F)$ and $|y|<1-|x|^i\}$, for $i\in\mathbb N$. Are there distinct $i,j\in\mathbb N$ such that the groupoids $(C_i,\hoop)$ and $(C_j,\hoop)$ are isomorphic?

The answer is negative. If $(C_i,\hoop)\cong(C_j,\hoop)$, then Theorem~\ref{thmmain} yields an automorphism $\psi$ of $\aspace{\mathbb R}{\mathbb R ^2}$ such that $\psi(C_i)=C_j$. \piros{The usual topological closure of $C_t$ is denoted by $\topclose{\mathbb R}{C_t}$, for $t=1,2$.}
Since $\psi$ is continuous, $\psi(\topclose{\mathbb R}{C_i})=\topclose{\mathbb R}{C_j}$. 
Let $B_t$ denote the boundary  
\[
\text{$\piros{\topclose{\mathbb R}{C_j}\setminus C_t = {} }  \{(x,y)\in \mathbb R^2: -1\leq x\leq 1$ and $|y|\mathrel{\piros{=}}1-|x|^t\}$}
\]
 of $C_t$, for $t=i,j$. \piros{Clearly,}
 $\psi(B_i)=B_j$, \piros{which} is a contradiction since $\psi$ is a linear mapping that preserves the degree of polynomials whose graphs \piros{are}  transformed. 
\end{example}

\begin{example} Let $n=1$, $F=\mathbb Q(\sqrt 2)$, $T=\mathbb D$, and let $C$ be the least $T$\piros{-segment convex} subset of $F=F^n$ that includes $\set{0,3}$. Since $[0,3]\cap\mathbb Q$ is $T$\piros{-segment convex} and includes $\set{0,3}$,
we conclude that $C\subseteq [0,3]\cap\mathbb Q$. Hence  $\sqrt 2\notin C$, and  $C$ is not $F$-convex. 

Thus, the assumption $F= \piros K$ in Proposition~\ref{vknxLl}\eqref{vknxLliv} cannot be omitted.
\end{example}

\begin{example} The rational vector spaces $_{\mathbb Q}(\mathbb R{}\piros{ \times\set 0)}$ and $_{\mathbb Q}\mathbb R^2$ are well-known to be isomorphic since they have the same dimension. (\piros{Recall} that any basis of $_{\mathbb Q}\mathbb R {} \piros{ {} \cong {}_{\mathbb Q}(\mathbb R \times\set 0)}$ is called a \emph{Hamel-basis}.) 
Therefore 
$C=\aspace{\mathbb Q} {\mathbb R {}\piros{ \times\set 0}}$ and $C'=\aspace{\mathbb Q}{\mathbb R^2}$ 
are isomorphic \piros{affine $\mathbb Q$-spaces}.    Thus, $(C,\opset(\mathbb Q))$ is isomorphic to $ (C',\opset(\mathbb Q))$, and they are both $\mathbb R$-convex subsets of $\aspace{\mathbb R}{\mathbb R^2}$. However, no automorphism of $\aspace{\mathbb R}{\mathbb R^2}$ maps $C$ onto $C'$. 

Observe here that $\adim{\piros{\mathbb R}}C\neq \adim{\piros{\mathbb R}}{C'}$, and none of  $C$ and $C'$ is bounded. This motivates  \piros{(without explaining  fully)} the assumption  
%``$C$ and $C'$ are bounded and have the same \piros{affine $F$-dimension}'' 
``$C$ and $C'$ have the same \piros{affine $F$-dimension and at least one of them is bounded}'' 
in  Theorem~\ref{thmmain}.
\end{example}

\begin{example}
A routine application of Hamel bases shows that the unit disc $(C_1,\hoop):=(\set{(x,y): x^2+y^2<1},\hoop)$ is isomorphic to \piros{another} subalgebra  $(C_2,\hoop)$ of $(\mathbb R^2,\hoop)$ such that both $C_2$ and $\mathbb R^2\setminus C_2$ are everywhere dense in the plane; see 
%Cz\'edli and Romanowska~
\cite[Proof of Lemma \piros{2.7}]{rczgaromanI} for details. However, no automorphism of $\aspace{\mathbb R}{\mathbb R^2}$ maps \piros{$C_1$} onto \piros{$C_2$}. 

This motivates the assumption in Theorem~\ref{thmmain} that $C$ and $C'$ are \piros{\emph{geometric convex}} subsets of $F^n$. 

This and the  previous  example show that Theorem~\ref{thmmain} is not valid for arbitrary $T$-convex subsets of $F^n$, so we 
\piros{added} some further assumptions.  However, it remains an open problem whether one could somehow relax the present assumptions. In particular, we do not know  whether they are independent.
\end{example}

\section{Auxiliary statements and proofs}
It is well-known that given an affine space $\piros{V=}\aspace F{V}$, \piros{which is} the full idempotent reduct of the vector space $_FV$, we can obtain the vector space structure back as follows: fix an element $o\in V$, to play the role of 0, define $x+y:=x-o+y$ and, for $p\in F$, $p x:=ox\pop$. This explains some (also well-known) basic facts on affine independence. Namely, 
a $(1+k)$-element subset $\set{a_0,\ldots,a_k}$ of \piros{$\aspace {\piros F}{V}$ is} called \emph{\piros{affine $F$-independent}},  if $a_i\notin\aspan {\piros F}{a_0,\ldots,a_{i-1},a_{i+1},\ldots,a_k}$, for $i=0,\ldots,k$. In this case, each element of the \piros{affine $F$-subspace} $\piros U:=\aspan {\piros F}{a_0,\ldots,a_k}$ can uniquely be written in the form 
$\xi_0a_0+\cdots+\xi_ka_k$ where the so-called \emph{barycentric coordinates} $\xi_0,\ldots,\xi_k$ belong to $F$ and their sum equals 1. Moreover, then $U=\aspace F{U}$ is freely generated by $\set{a_0,\ldots,a_k}$\piros{; that is, each mapping $\set{a_0,\ldots,a_k}\to U$ extends to an endomorphism of $\aspace F{U}$.}

To capture convexity, we need a similar concept: 
$\set{a_0,\ldots,a_k}\subseteq {F^n}$
will be called \emph{$\opset(T)$-independent} if $a_i\notin\cnvclose T{a_0,\ldots,a_{i-1},a_{i+1},\ldots,a_k}$, for $i=0,\ldots,k$. 
It is not hard to see (and it is stated in \cite{PRS03}) that
if $\set{a_0,\ldots,a_k}\subseteq F^n$ is \piros{affine $K$-independent}, then it is a    free generating set of $(\cnvclose{T}{a_0,\ldots,a_k}, \opset(T))$ and of  $(\cnvclose{K}{a_0,\ldots,a_k}, \opset(K))$. However\piros{, as opposed to affine $K$-independence},  $\opset(K)$-independence does not  imply free $\opset(K)$-generation.  For example, the vertices $a_0,\ldots,a_5$ of a regular hexagon in the real plane form an $\opset(\mathbb R)$-independent subset but 
$(\cnvclose{\mathbb R}{a_0,\ldots,a_5}, \opset(\mathbb R))$ is not freely generated since $a_0a_3\hop=a_1a_4\hop$. 

As usual, maximal independent subsets are called \emph{bases}\piros{, or \emph{point bases}}.
If an \piros{affine $F$-space} $V$ has a finite \piros{affine $F$-basis}, the\piros{n} all of its bases have the same number of element\piros{s}, the \piros{so-called (\emph{affine $F$-$)$ dimension}}  $\adim FV$  of the space. 
If $V$ is an \piros{affine $F$-space} with dimension $k$, then, for any $\set{b_0,\ldots,b_k}\subseteq V$,
\begin{equation}\label{spnKorbZs}
\set{b_0,\ldots,b_k}  \text{ spans }\aspace FV
\text{ if{f} } \set{b_0,\ldots,b_k}
\text{ is an \piros{affine $F$}-basis of } \aspace FV\text.
\end{equation}

\begin{lemma}\label{iHezTcXc}
 Let $L$ be a subfield of $\mathbb R$ such that $F\subseteq L$.
Assume that $X\subseteq F^n$. Then, for each 
$d\in F^n\cap \cnvclose LX$,  there are a $k\in\mathbb N_0$, an \piros{affine $L$-$($and therefore affine $F$-$)$} independent subset $\set{a_0,\ldots,a_k}$ of $X$, and  $\xi_0,\ldots,\xi_k\in\opset(F)$ such that $\xi_0+\cdots+\xi_k=1$ and $d=\xi_0a_0+\cdots+\xi_ka_k$. Consequently, $\cnvclose FX=F^n\cap\cnvclose LX$. 
\end{lemma}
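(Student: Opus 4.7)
My plan is to take the generic $L$-convex combination supplied by \eqref{HoWgEnre}, prune it to an affine $L$-independent support, and then argue that the surviving coefficients, a priori only in $L$, must already lie in $F$. First, by \eqref{HoWgEnre} applied with $L$ in place of $T$, I write $d=\sum_{i=0}^{m}\eta_i a_i$ with $a_i\in X$, $\eta_i\in\boopint(L)$, and $\sum\eta_i=1$; discarding terms with $\eta_i=0$ I may assume each $\eta_i$ is strictly positive.

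Next, if $\set{a_0,\ldots,a_m}$ is affine $L$-dependent, I fix a nontrivial relation $\sum\mu_i a_i=0$ with $\mu_i\in L$ and $\sum\mu_i=0$. Then $d=\sum(\eta_i+t\mu_i)a_i$ for every $t\in L$, the coefficients still summing to $1$. Because the $\mu_i$ have both signs (they sum to $0$ and are not all zero), the feasible range $\set{t:\eta_i+t\mu_i\ge 0\text{ for all }i}$ is a nondegenerate closed interval in $L$, and at either endpoint some coefficient vanishes, allowing me to drop an $a_i$. Iterating terminates (each round reduces the support), yielding an affine $L$-independent subset $\set{a_0,\ldots,a_k}\subseteq X$ and strictly positive $\eta_i\in L$ with $\sum\eta_i=1$ and $d=\sum\eta_i a_i$.

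The crux is the descent to $F$. Affine $L$-independence of $\set{a_0,\ldots,a_k}$ means $a_1-a_0,\ldots,a_k-a_0$ are $L$-linearly independent in $\mathbb R^n$; as they sit in $F^n$ and matrix rank is invariant under field extension ($F\subseteq L\subseteq\mathbb R$), they are already $F$-linearly independent, which also yields the parenthetical affine $F$-independence asserted in the lemma. Consequently the system $d-a_0=\sum_{i=1}^{k}\eta_i(a_i-a_0)$ has a unique real solution, and selecting $k$ independent rows exhibits an invertible $k\times k$ submatrix over $F$; by Cramer's rule the unique solution lies in $F^k$. Hence $\eta_1,\ldots,\eta_k\in F$, and then $\eta_0=1-\sum_{i\ge 1}\eta_i\in F$ as well, with positivity preserved since $F$ inherits the ordering of $\mathbb R$. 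The identity $\cnvclose F X=F^n\cap\cnvclose L X$ now drops out: $\subseteq$ is immediate from $F\subseteq L$ together with $\cnvclose F X\subseteq F^n$, and $\supseteq$ is precisely the statement just proved.

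I expect this descent step to be the real obstacle: the reduction to affine independence is a routine ``linear-programming'' shrinking, but forcing the coefficients from $L$ down to $F$ needs two separate ingredients to combine — uniqueness of barycentric coordinates (from affine independence) and field-extension invariance of rank — and these have to be applied in this order, since without the pruning the coefficients are not canonical and the descent argument has nothing to latch onto.
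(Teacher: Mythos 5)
Your proof is correct and follows essentially the same route as the paper's: reduce $d$ to a convex combination over an affinely independent support (the paper cites Carath\'eodory's theorem over $\mathbb R$ together with a minimal-dimension affine subspace argument, whereas you reprove that reduction by the standard coefficient-shifting argument carried out over $L$), and then observe that the coefficients solve a linear system whose data lie in $F^n$ and whose solution is unique, hence belongs to $F$. The only notable divergence is that the paper's minimality argument yields the stronger affine $\mathbb R$-independence, making uniqueness of the real barycentric coordinates immediate, while your affine $L$-independence requires the additional (and correctly executed) rank-invariance and Cramer's-rule step to force the coefficients down into $F$.
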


\piros{
This lemma belongs to the folklore. For the reader's convenience (and having no reference at hand), we present a proof.}

\begin{proof}[Proof \piros{of Lemma~\ref{iHezTcXc}}]
Since $d\in \cnvclose L {X\cap L^n} \subseteq  \cnvclose{\mathbb R}{X\cap \mathbb R^n}$, we can choose an \piros{affine $\mathbb R$-subspace} $V\subseteq \mathbb R^n$ of minimal dimension such that $d\in \cnvclose {\mathbb R}{X\cap V}$. 
The \piros{affine $\mathbb R$-dimension} of $V$ will be denoted by $k$. 
By Carath\'eodory's Fundamental Theorem, there are $a_0,\ldots,a_k\in X\cap V$ such that $d\in\cnvclose {\mathbb R}{a_0,\ldots,a_k}$. The \piros{affine $\mathbb R$-subspace}  $\aspan{\mathbb R}{a_0,\ldots,a_k}$ is $V$ since otherwise a subspace with smaller dimension would do. 
Hence, using \eqref{spnKorbZs}, we conclude that  $\set{a_0,\ldots,a_k}$ is 
an \piros{affine $\mathbb R$-basis} of $V$.
Therefore, there is a unique $(\xi_0,\ldots,\xi_k)\in\mathbb R^{1+k}$ such that 
\begin{equation}\label{xirEeqs}
d=\xi_0a_0+\cdots+\xi_k a_k\text{ and }\xi_0+\cdots+\xi_k=1\text.
\end{equation}
These uniquely determined $\xi_i$ are non-negative since $d\in\cnvclose {\mathbb R}{a_0,\ldots,a_k}$. We can consider \eqref{xirEeqs} a system of linear equations for $(\xi_0,\ldots,\xi_k)$, and this system has a unique solution. Since $d,a_0,\ldots,a_k\in F^n$, 
the rudiments of  linear algebra imply that 
$(\xi_0,\ldots,\xi_k)\in F^{1+k}$.
This, together with the fact that the \piros{affine $\mathbb R$-independence} of the set $\set{a_0,\ldots,a_k}\subseteq F^n$ implies its \piros{affine $L$-independence}, proves the first part of the lemma. The second part is a trivial consequence of the first part.
\end{proof}

\begin{figure}
\centerline
{\includegraphics[scale=1.0]{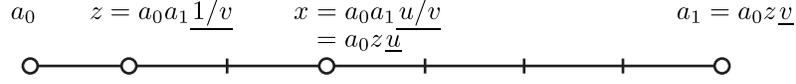}}
\caption{\piros{The} case $k=1$ and   $p=u/v=3/7$}\label{figtwo}
\end{figure}

\begin{proof}[Proof of Proposition~\ref{vknxLl}] 
\piros{Part \eqref{vknxLli} follows obviously from the fact that $a,b\in C$ with $a\neq b$ implies that $\lsegmnt Tabab \subseteq C$.} 

If $C$ is \piros{a} geometric \piros{convex subset of $F^n$}, then it is obviously $F$-convex. Conversely, if $C$ is $F$-convex,
then it is \piros{a} geometric \piros{convex subset of $F^n$} since  Lemma~\ref{iHezTcXc} yields that 
$C=\cnvclose FC=F^n\cap \cnvclose{\mathbb R}C$. This proves part \eqref{vknxLlii}.

Part \eqref{vknxLliii} is evident. 

In order to prove \eqref{vknxLliv}, assume that $C$ is $T$\piros{-segment convex}. Let $D:=\cnvclose{K} C$. Since $D$ is $K$-convex and $C\subseteq D$, it \piros{suffices} to show that $D\subseteq C$. Let $x$ be an arbitrary element of $D= \cnvclose{K} C$. 
We obtain from  Lemma~\ref{iHezTcXc}  that $D=K^n\cap\cnvclose{\mathbb R}C$. Hence, again by Lemma~\ref{iHezTcXc},  there are a \emph{minimal}  $k\in\mathbb N_0$, an \piros{affine $\mathbb R$-independent} subset $\set{a_0,\ldots,a_k}\subseteq C$, and a 
%\piros{unique} 
$(\xi_0,\ldots,\xi_k)\in \bigl(\opset(K)\bigr)^{1+k}$ such that 
\begin{equation*}%\label{xxIaiEtoC}
x=\xi_0a_0+\cdots+\xi_k a_k\text{ and }\xi_0+\cdots+\xi_k=1\text .
\end{equation*}
This allows us to prove the desired containment 
$x\in C$ by induction on $k$. If $k=0$, then $x=a_0\in C$ is evident. 

Next, assume that $k=1$. Then $x=a_0a_1\pop$ where $p=u/v\in \opint(K)$ and \piros{$u,v\in T$ with $0<u<v$}. 
Let $z:=a_0a_1\mop{1/v}$, see Figure~\ref{figtwo} for $u/v=3/7$, and we will rely on Lemma~\ref{barireveAl}. Then $\tline T{a_0}z$  contains  \piros{$a_0=a_0z\mop 0$ and} 
$a_1=a_0z\mop v$ since $\piros{0},v\in \piros{T}$. 
Hence $x=a_0z\mop u\in \lsegmnt T{a_0}z{a_0}{a_1}$ together with $T$-density implies that $x\in C$.

Finally, assume that $k>1$. Let $S$ denote the $k$-dimensional real simplex $\cnvclose{\mathbb R}{a_0,\ldots,a_k}$. 
The minimality of $k$ (or the positivity of all $\xi_i$) yields that $x$ is in the interior of  $S$. 
Hence the line $\tline{\mathbb R}{a_k}x$ has exactly two common points with the boundary of $S$; one of the\piros{m} is $a_k$, and  the other one will be denoted by $y$. 
Since $y$ can be obtained by solving a system of linear equations and all of $x,a_0,\ldots, a_k\in K^n$, it follows that $y\in K^n$.
Since $y$ belongs to a facet of $S$, the induction hypothesis yields that $y\in C$.
 Clearly, $x$ is between $a_k$ and $y$, whence $y=a_ky\pop$ for some $p\in\opint(K)$. Hence the already settled two-point case implies that $x\in C$.
\end{proof}
\sajat{REMARK: the proof worked even for $T=\mathbb Z$!}

The next lemma asserts that although $(C,\opset(T))$ cannot be generated by an independent set $G$  of points in general, $G$ satisfactorily describes $C$ by means of \emph{existential formulas}. This fact will enable us to use some ideas taken from \cite{MatRomSm:dyadpolyg}.

\begin{lemma}\label{fMlaLMam}
 Let $k\in  \mathbb N_0$ and $\xi_0,\ldots,\xi_k\in \mathbb Q$ such that $\xi_0+\cdots+\xi_k=1$. Then there exists \piros{an existential} formula $\keplet T{\xi_0,\ldots,\xi_k}{x_0,\ldots,x_k} y$ in the language of $(F^n,\opset(T))$ with the following property: whenever
\piros{$a_0,\ldots,a_k,b\in F^n$}, then 
\[b=\xi_0a_0+\cdots +\xi_ka_k\text{ if{f} } \keplet T{\xi_0,\ldots,\xi_k}{a_0,\ldots,a_k} b \text{ holds in } (F^n,\opset(T))  \text.
\]
If, in addition, $C$ is a $\mathbb Q$-convex subset of $F^n$ such that $\set{b,a_0,\ldots,a_k}\subseteq C$, then 
\[b=\xi_0a_0+\cdots +\xi_ka_k\text{ if{f} } \keplet T{\xi_0,\ldots,\xi_k}{a_0,\ldots,a_k} b \text{ holds in } (C,\opset(T))  \text.
\]
\end{lemma}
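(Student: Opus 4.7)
First, since $\mathbb Z\subsetneq T\cap\mathbb Q$ and $T\cap\mathbb Q$ is a subring of $\mathbb Q$, it is a nontrivial localization, so there is an integer $m\geq 2$ with $1/m\in T$; consequently every $k/m^n\in[0,1]$ lies in $\boopint(T)$ and gives rise to a binary term function in $\opset(T)$. I dispose of the binary case $k=1$ by a direct term equation. Given $\xi_0,\xi_1\in\mathbb Q$ with $\xi_0+\xi_1=1$ and $\xi_0=u/v\in(0,1)$ in lowest terms, I choose $n$ with $m^n>v$ and take as $\keplet T{\xi_0,\xi_1}{x_0,x_1}y$ the quantifier-free equation
\[
y\,x_1\mop{(m^n-v)/m^n}\;=\;x_0\,x_1\mop{(m^n-u)/m^n}.
\]
After multiplication by $m^n$ this becomes $vy+(m^n-v)x_1=ux_0+(m^n-u)x_1$, which is equivalent to $y=\xi_0 x_0+\xi_1 x_1$; the two operation indices lie in $\opint(T)$ by choice of $n$. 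The boundary values $\xi_0\in\{0,1\}$ give the trivial equations $y=x_1$ or $y=x_0$.

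Next I induct on $k$. The base $k=0$ is the formula $y=x_0$. For $k\geq 1$ I split on signs. Case~A (all $\xi_i\in[0,1]$): if some $\xi_j=1$, then $y=x_j$; otherwise pick $j$ with $\xi_j\in(0,1)$, set $\xi'_i:=\xi_i/(1-\xi_j)$ for $i\neq j$ (still in $[0,1]\cap\mathbb Q$ and summing to $1$), and take the formula $\exists z\bigl(\Psi\wedge\Phi\bigr)$, where $\Psi$ is the inductive formula expressing $z=\sum_{i\neq j}\xi'_i x_i$ and $\Phi$ is the binary formula from the first paragraph expressing $y=z\,x_j\mop{\xi_j}$. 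Case~B (some $\xi_j<0$): set $\eta:=-\xi_j/(1-\xi_j)\in(0,1)\cap\mathbb Q$ and $\xi''_i:=\xi_i/(1-\xi_j)$ for $i\neq j$, and take $\exists z\bigl(\Phi'\wedge\Psi'\bigr)$, where $\Phi'$ is the binary formula for $z=y\,x_j\mop\eta$ and $\Psi'$ is the inductive formula for $z=\sum_{i\neq j}\xi''_i x_i$; a short calculation shows that $(1-\eta)b+\eta a_j=\sum_{i\neq j}\xi''_i a_i$ precisely when $b=\sum_i\xi_i a_i$. Each reduction drops $k$ by one, so the recursion terminates.

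Finally, the $\mathbb Q$-convex addendum: the witness $z$ introduced at each recursive step is, by its defining equation, a rational convex combination of points already in $\{b,a_0,\ldots,a_k\}$ --- in Case~A a convex combination of $\{a_i:i\neq j\}$, and in Case~B the convex combination $(1-\eta)b+\eta a_j$ of $b$ and $a_j$. Iterating, every existentially quantified witness produced by the recursion is a rational convex combination of $\{b,a_0,\ldots,a_k\}$; if these points lie in a $\mathbb Q$-convex $C$, so do the (unique) witnesses, and the formula holds in $(C,\opset(T))$ if and only if $b=\sum\xi_i a_i$. I expect the main obstacle to lie in the first paragraph: spotting the ``denominator-balancing'' identity $vy+(m^n-v)x_1=ux_0+(m^n-u)x_1$, which converts an affine relation whose denominator $v$ need not be invertible in $T$ into two genuine $T$-convex combinations of common weight $m^n$, is the gadget that makes the whole construction expressible in $\opset(T)$; once it is in hand, the rest is routine induction and bookkeeping to keep the witnesses inside $C$.
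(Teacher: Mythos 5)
Your argument is correct, and it reaches the lemma by a genuinely different route than the paper in both of its key steps. For the binary building block the paper fixes the smallest prime $p$ with $1/p\in T$ and expresses $y=\xi_0x_0+\xi_1x_1$ by an existential chain of auxiliary points $u_0,\ldots,u_m$ (equally spaced rational points of the segment joining $x_0$ to the farthest of the three points involved), each tied to two others by a single $\mop{1/p}$ operation; your denominator-balancing identity $vy+(m^n-v)x_1=ux_0+(m^n-u)x_1$ replaces that whole chain by one quantifier-free term equation with operation indices $(m^n-v)/m^n,(m^n-u)/m^n\in\opint(T)$, so the base case introduces no witnesses that would have to be kept inside $C$. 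The price is that your gadget only covers $\xi_0\in(0,1)$, so negative coefficients must be disposed of elsewhere; you do this in Case~B by transposing $b$ into a genuine convex combination, $(1-\eta)b+\eta a_j=\sum_{i\neq j}\xi''_ia_i$, and peeling off one index per step, whereas the paper partitions the index set into the negative and the positive class, normalizes each class to a positive convex combination, and finishes with a binary formula $\keplet T{\kappa_0,\kappa_1}{z_0,z_1}y$ whose coefficient $\kappa_0$ may itself be negative. Both proofs secure the $\mathbb Q$-convexity addendum in the same way --- every existential witness is exhibited as a rational convex combination of points already known to lie in $C$, and the converse direction is free because existential formulas persist from $(C,\opset(T))$ up to $(F^n,\opset(T))$ --- and both recursions terminate because each step strictly decreases the number of points. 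Your version is leaner (at most one witness per recursion step and a quantifier-free base case); the paper's sign-splitting yields a more balanced decomposition and a single uniform binary gadget valid for every rational coefficient.
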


\begin{figure}
\centerline
{\includegraphics[scale=1.0]{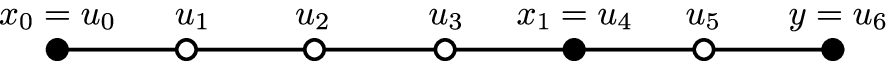}}
\caption{Illustrating $\keplet T{-2/4,\,6/4}{x_0,x_1}y$}\label{figthree}
\end{figure}

\begin{proof} Let $p$ be the smallest prime number such that $1/p\in T$; there is such a prime since $\mathbb Z\subset T\cap \mathbb Q$. 
We proceed by induction on $k$. If $k=0$, then $\xi_0=1$, so we let
$\keplet T{1}{x_0} y$ to be the formula $y=x_0$. 

Next, assume that $k=1$. To avoid a complicated formalization, we elaborate the details only when $(p,\xi_0,\xi_1)=(3,-1/2,3/2)$, see Figure~\ref{figthree}; the general case will clearly be analogous. The first step is to make all denominators equal and greater than $p$. Hence we write
$(3,-2/4,6/4)$ instead of $(3,-1/2,3/2)$. Then, as Figure~\ref{figthree} indicates, 
\begin{equation*}%\label{FRmulkPa}
\begin{aligned}
\keplet T{-2/4,\,6/4}{x_0,x_1}y:= (\exists u_0)\ldots(\exists u_6)\,\bigl(  x_0=u_0 \ees x_1=u_4 \ees u_0u_3\mop{1/3} =u_1\ees 
\cr
\kern 0cm  u_1u_4\mop{1/3} =u_2 \ees u_2u_5\mop{1/3} =u_3 \ees u_3u_6\mop{1/3} =u_4   \piros{{}\ees u_6u_3\mop{1/3} =u_5}     \ees y=u_6 \bigr)
\end{aligned}
\end{equation*}
does the job since $\set{b,a_0,a_1}\subseteq C$ together with the $\mathbb Q$-convexity of $C$ clearly implies that the $u_i$ belong to $C$.

Next, assume that $k\geq 2$ and the statement holds for smaller values. If one of the $\xi_0,\ldots,\xi_k$ is zero, say $x_i=0$, then we can obviously let
\[\keplet T{\xi_0,\ldots,\xi_k}{x_0,\ldots,x_k} y := \keplet T{\xi_0,\ldots,\xi_{i-1},\xi_{i+1} \ldots,\xi_k  }{x_0,\ldots,x_{i-1},x_{i+1},\ldots,x_k} y\text.
\]

So we can assume that none of the $\xi_i$ is zero. We have to partition $\set{0,1,\ldots,k}$ into the union of two  \piros{nonempty} disjoint subsets $I$ and $J$ such that  the $\xi_i$, $i\in I$, have the same sign, and the same holds for  the $\xi_j$, $j\in J$. If all the $\xi_0,\ldots,\xi_k$ are positive, then any partition will do. Otherwise we can let $\emptyset\neq I=\set{i: \xi_i<0}$; then $J=\set{0,\ldots,k}\setminus I$ is 
\piros{nonempty} since $\xi_0+\cdots+\xi_k=1>0$. To ease our notation, we can assume, without loss of generality, that $I=\set{0,\ldots,t}$ and $J=\set{t+1,\ldots,k}$. Let $\kappa_0=\xi_0+\cdots+\xi_t$ and $\kappa_1=\xi_{t+1}+\cdots+\xi_k$. Then $\kappa_0\neq 0\neq \kappa_1$ and $\kappa_0+\kappa_1=1$. Define $\eta_i:=\xi_i/\kappa_0$ for $i\leq t$ and  $\tau_j:=\xi_j/\kappa_1$ for $j > t$. Clearly, $\eta_0+\cdots+\eta_t=1$ and $\tau_{t+1}+\cdots+\tau_k=1$. Moreover, all the $\eta_i$ and the $\tau_j$ are positive, and the identity 
\[ \xi_0x_0+\cdots  + \xi_{k}x_{k} = \kappa_0( \eta_0x_0+\cdots +\eta_{t}x_{t})+ 
\kappa_1( \tau_{t+1}x_{t+1}+\cdots +\tau_{k}x_{k})
\]
clearly holds. Therefore we can let 
\begin{align*}
\keplet T{\xi_0,\ldots,\xi_k}{x_0,\ldots,x_k} y :&= \keplet T{\eta_0,\ldots,\eta_t}{x_0,\ldots,x_t}{z_0} \ees \keplet T{\tau_{t+1},\ldots,\tau_k}{x_{t+1},\ldots,x_k}{z_1}\cr
& \ees \keplet T{\kappa_0,\kappa_1}{z_0,z_1}y\text.
\end{align*}
This formula clearly does the job in $(F^n,\opset(T))$. It also works in \piros{$(C,\opset(T))$, provided that $C$ is $\mathbb Q$-convex,} since if $a_0,\ldots,a_k,b\in C$, then 
$\eta_0a_0+\cdots+\eta_ta_t\in C$ and 
$\tau_{t+1}a_{t+1}+\cdots+\piros{\tau}_ka_k\in C$, and the induction hypothesis (for $k-1$ and then for $k=1$) applies. 
\end{proof}

The following easy lemma is perhaps known (for arbitrary fields). Having no reference at hand, we will give \piros{an} easy proof.

\begin{lemma}\label{siGrTv} Let $C$ be an  $F$-convex subset of $F^n$. Assume that $\set{a_0,\ldots,a_k}$ is a maximal \piros{affine $F$-independent} subset of $C$, and let $V:=\aspan F{a_0,\ldots,a_k}$. Then 
\begin{enumeratei}  
\item\label{siGrTvi}  $C\subseteq V$ \piros{and $V=\aspan FC$}.
\item\label{siGrTvii} $V$ does not  depend on the choice of  $\set{a_0,\ldots,a_k}$.
\item\label{siGrTviii}  All maximal \piros{affine $F$-independent} subsets of $C$ consi\piros{st} of $1+k$ elements.
\end{enumeratei}
\end{lemma}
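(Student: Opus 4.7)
The plan is to prove (i) first; parts (ii) and (iii) will then follow essentially for free. For part (i), I show $C \subseteq V$ by the standard ``maximality implies spanning'' argument. Let $c \in C$ and suppose, for contradiction, that $c \notin V$. I claim that $\set{a_0,\ldots,a_k,c}$ is then affine $F$-independent, contradicting the assumed maximality of $\set{a_0,\ldots,a_k}$. The first required condition, $c \notin \aspan F{a_0,\ldots,a_k}$, is exactly $c \notin V$. The remaining conditions $a_i \notin \aspan F{\set{a_0,\ldots,a_k,c}\setminus\set{a_i}}$ reduce to the affine exchange property: if $a_i$ were in that span, writing $a_i = \sum_{j\ne i}\lambda_j a_j + \mu c$ with coefficients summing to $1$, then $\mu=0$ would contradict the affine $F$-independence of $\set{a_0,\ldots,a_k}$, while $\mu \ne 0$ would let us solve for $c$ and conclude $c \in \aspan F{a_0,\ldots,a_k} = V$, again a contradiction. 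Thus $C \subseteq V$. The equality $V = \aspan F C$ is now immediate: $\aspan F C \subseteq V$ since $V$ is an affine $F$-subspace containing $C$, and conversely $V = \aspan F{a_0,\ldots,a_k} \subseteq \aspan F C$ because $\set{a_0,\ldots,a_k}\subseteq C$.

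For part (ii), apply (i) to any maximal affine $F$-independent subset of $C$: its affine span equals $\aspan F C$, which is intrinsic to $C$ and does not depend on the chosen maximal subset. For part (iii), let $\set{b_0,\ldots,b_m}$ be another maximal affine $F$-independent subset of $C$. By (ii), both $\set{a_0,\ldots,a_k}$ and $\set{b_0,\ldots,b_m}$ are affine $F$-independent spanning sets of the same affine $F$-subspace $V = \aspan F C$, hence affine $F$-bases of $V$. Since $V \subseteq F^n$, it is finite-dimensional, and the standard invariance of the dimension $\adim F V$ forces $k+1 = m+1$.

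I do not anticipate a serious obstacle: this is the routine affine-space analogue of ``maximal independent sets are bases'', with the (mild) care required only in the exchange step above. It is worth noting that the $F$-convexity of $C$ is never actually used; the statement and proof work verbatim for any nonempty subset $C \subseteq F^n$ once a maximal affine $F$-independent subset is known to exist, which is automatic since any such subset has at most $n+1$ elements.
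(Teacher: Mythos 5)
Your proof is correct and takes essentially the same route as the paper: part (i) via the maximality-plus-exchange argument (you spell out the exchange step that the paper leaves implicit), and parts (ii)--(iii) by invariance of affine dimension. The only cosmetic difference is that you read (ii) off directly from the clause $V=\aspan FC$ and get (iii) by noting that both maximal sets are affine $F$-bases of $V$, whereas the paper routes both through the intersection $U=V\cap W$ and dimension comparisons; your closing observation that the $F$-convexity of $C$ is never actually used is also accurate.
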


\begin{proof}
We know that $V=\{\xi_0a_0+\cdots+\xi_ka_k$ : $\xi_0+\cdots+\xi_k=1$, $(\xi_0,\ldots,\xi_k)\in F^{1+k}\}$. If we had $C\not\subseteq V$, then  $\set{a_0,\ldots, a_k,a_{k+1}}$ would be \piros{affine $F$-independent} for every $a_{k+1}\in C\setminus V$, which could contradict the maximality of $\set{a_0,\ldots,a_k}$. Hence $C\subseteq V$, \piros{which gives $\aspan FC\subseteq V$. Conversely,
$\set{a_0,\ldots,a_k}\subseteq C$ implies
that $V=\aspan F{a_0,\ldots,a_k}\subseteq \aspan FC$,}  proving part \eqref{siGrTvi}.  

Next, let  $\set{b_0,\ldots,b_t}$ be another maximal \piros{affine $F$-independent} subset of $C$, and let $W$ be the \piros{affine $F$-subspace} it spans. By part \eqref{siGrTvi}, $C\subseteq W$. Let $U:=V\cap W$. Since $C\subseteq U$,  $\set{a_0,\ldots,a_k}$ and $\set{b_0,\ldots,b_t}$ are \piros{affine $F$-independent} in $U$. This yields that $k\leq \adim FU$ and $t\leq \adim FU$. On the other hand, $U\subseteq V$ and $U\subseteq W$ give that $\adim FU\leq \adim FV=k$ and $\adim FU\leq t$. Hence $t=\adim FU =k$, proving part \piros{\eqref{siGrTviii}}. 

Using  $\adim FU=\adim FV$ and $U\subseteq V$ we conclude that  $U=V$. We obtain $U=W$ similarly, whence $W=V$ proves part \piros{\eqref{siGrTvii}}.
\end{proof}

%ööö itt tartok hétfõ 22:14

\begin{proof}[Proof of Theorem~\ref{thmmain}]
Assume that \eqref{iimainitem} holds. Then $\psi$ is of the form $x\mapsto Ax+b$ where $b\in F^n$ \piros{is} a column vector and $A$ is an invertible $n$-by-$n$ matrix over $F$. Then $A$ is also an invertible real matrix and $b\in\mathbb R^n$, whence $\psi$ extends to an $\mathbb R^n\to\mathbb R^n$ automorphism. Thus, \eqref{iimainitem} implies \eqref{iiimainitem}.

Since $\opset(T)\subseteq \underline{\mathbb R}$, the automorphisms of the real affine space preserve the $\opset(T)$-structure. Hence  \eqref{iiimainitem} trivially implies 
 \eqref{imainitem}.

Next, assume that \eqref{imainitem} holds, and let $\phi\colon (C,\opset(T))\to(C',\opset(T))$ be an isomorphism. For $x\in C$, $\phi(x)$ will usually be denoted by $x'$. If an element of $C'$ is denoted by, say, $y'$, then $y$ will automatically stand for $\phi^{-1}(\piros{y'})$. We assume that $|C|>1$ since otherwise the \piros{statement} is trivial. 
Firstly, we show that 
\begin{equation}\label{sMedkjmon} \adim FC=\adim F{C'}\text.
\end{equation}
Since this is stipulated \piros{in the theorem} if $F\neq \mathbb Q$, 
let us assume that $F=\mathbb Q$ while proving \eqref{sMedkjmon}. Let, say $\adim{\mathbb Q}{C}\leq \adim{\mathbb Q}{C'}=:k$. By Lemma~\ref{siGrTv}, we can choose a (maximal) \piros{affine $F$-independent, that is $\mathbb Q$-independent,} subset $\set{a_0',\ldots,a'_k}$ in $C'$. It suffices to show that $\set{a_0,\ldots,a_k}\subseteq C$ is \piros{affine $F$-independent}. By way of contradiction, \piros{suppose} that this is not the case. Then, apart from indexing, there is a $t\in\set{1,\ldots,k}$ such that
$\set{a_1,\ldots,a_t}$ is \piros{affine $\mathbb Q$-independent} and  $a_0\in\aspan{\mathbb Q}{a_1,\ldots,a_t}$. Hence there are $\xi_1,\ldots,\xi_t\in\mathbb Q$ whose sum equals 1 such that $a_0=\xi_1a_1+\cdots+\xi_ta_t$. It follows from Lemma~\ref{fMlaLMam}
that $\keplet T{\xi_1,\ldots,\xi_t}{a_1,\ldots,a_t}{a_0}$ holds in $(C,\opset(T))$. Consequently, $\keplet T{\xi_1,\ldots,\xi_t}{a_1',\ldots,a_t'}{a'_0}$ holds in $(C',\opset(T))$.  Hence Lemma~\ref{fMlaLMam} implies that 
$a'_0=\xi_1a'_1+\cdots+\xi_ta'_t$, which contradicts the \piros{affine $F$-independence} of  $\set{a_0',\ldots,a'_k}$. This proves \eqref{sMedkjmon}.

\sajat{The paragraph starting here does NOT come from the paragraph above since $F$ can be different from $\mathbb Q$.}
%From now on in the proof, 
\piros{Next, we} let $k=\adim FC=\adim F{C'}$. Clearly, $k\leq n$. Let $V:=\aspan FC$ and $V':=\aspan F{C'}$. We claim that for $t=0,1,\ldots, k$ and for an arbitrarily fixed $a_0\in C$,  
\begin{equation}\label{vanJoBzis}
\begin{aligned}
&\text{there are  }a_1,\ldots,a_t\in \piros C\text{ such that both } \set{a_0,\ldots,a_t}\subseteq C  \text{ and }\kern 0.1cm
\cr
&\set{a_0',\ldots, a_t'}=\phi\bigl( \set{a_0,\ldots, a_t} \bigr)
\subseteq C'\text{ are }\piros{\text{affine } F\text{-independent.}}
\end{aligned}
\end{equation}
\piros{(This assertion does not follow from the previous paragraph since here we do not assume that $F=\mathbb Q$.)}
Of course, we need %this statement 
\piros{\eqref{vanJoBzis}} only for $t=k$, but we prove it by induction on $t$. If $t\leq 1$, then \eqref{vanJoBzis} is trivial. Assume that $1<t\leq k$ and \eqref{vanJoBzis} holds for $t-1$. So we have an \piros{affine $F$-independent} subset $\set{a_0,\ldots,a_{t-1}}$ such that $\set{a_0',\ldots,a_{t-1}'}$  is also 
\piros{affine $F$-independent}. Let $\aspan F{a_0,\ldots,a_{t-1}}$ and $\aspan F{a'_0,\ldots,a'_{t-1}}$ be denoted by $V_{t-1}$ and $V'_{t-1}$, respectively. 
Since $t-1<k=\adim FC=\adim F{C'}$, there \piros{exist} element\piros{s} $x\in C\setminus V_{t-1}$ and  $y'\in C'\setminus V'_{t-1}$. 
Then $\set{a_0,\ldots,a_{t-1},x}$ 
and $\set{a'_0,\ldots,a'_{t-1},y'}$ are 
\piros{affine $F$-independent}. We can assume that $x'\in V'_{t-1}$ and $y\in V_{t-1}$   since otherwise $\set{a'_0,\ldots,a'_{t-1},x'}$ or 
$\set{a_0,\ldots,a_{t-1},y}$ would be \piros{affine $F$-independent}, and 
we could choose an appropriate $a_t$ from $\set{x,y}$.  
Take a $p\in\opint(T)$, and define $a_t:=yx\pop\in C$. Then $a'_t=y'x'\pop$. \piros{Suppose} for a contradiction that  $a_t\in V_{t-1}$.  Then, by Lemma~\ref{barireveAl},   $x=ya_t\mop{1/p}\in V_{t-1}$, a contradiction. Hence $a_t\notin V_{t-1}$ and $\set{a_0,\ldots, a_{t-1},a_t}$ is \piros{affine $F$-independent}. Similarly, 
\piros{suppose} for a contradiction that  $a'_t\in V'_{t-1}$.  Then, again by Lemma~\ref{barireveAl}, 
$y'=a'_t x' \mop{p/(p-1)}\in V'_{t-1}$ is a contradiction. Hence  $a'_t\notin V'_{t-1}$ and 
$\set{a_0,\ldots, a_{t-1},a'_t}$ is \piros{affine $F$-independent}.  This completes the proof of \eqref{vanJoBzis}.

\piros{From now on in the proof, \eqref{vanJoBzis} allows us to assume that  $\set{a_0,\ldots,a_k}\subseteq C$ and $\set{a'_0,\ldots,a'_k}\subseteq C'$ are affine $F$-independent subsets with $a_i'=\phi(a_i)$, for $i=0,\ldots,k$.}
For $\emptyset\neq  X\subseteq F^n$, we define
two ``relatively rational'' parts of $X$ as follows:  
\[
\rrps X :=X\cap\aspan{\mathbb Q}{a_0,\dots,a_k}\text{ and  }
\rrpv {X} :=X\cap\aspan{\mathbb Q}{a'_0,\dots,a'_k}\text{.}
\]
\piros{I}f $F=\mathbb Q$, then \piros{Lemma~\ref{siGrTv}\eqref{siGrTvi} yields that 
\[\rrps C=C\cap \aspan{\mathbb Q}{a_0,\ldots,a_k} = C\cap \aspan{\mathbb Q}C=C,
\]
and} $\rrpv{ C'}=C'$ \piros{follows similarly}.
Moreover, even if $F\neq \mathbb Q$, $\rrps C$ is dense in $C$, and $\rrpv {C'}$ is dense in $C'$ \piros{(in topological sense)}. 
The restriction of a map $\alpha$ to a subset $A$ of its domain will be denoted by $\restrict \alpha A$.
We claim that there is an automorphism $\psi$ of $\aspace F{F^n}$ such that 
\begin{equation}\label{psionDsEtoK}
\restrict{\psi}{\rrps{C}}= \restrict{\phi}{\rrps{C}}\quad\text{ and }\quad
\psi\bigl(\rrps C\bigr)=\rrpv {C'}\text.
\end{equation}
In order to prove this, 
extend $\set{a_0,\ldots,a_k}$ and $\set{a_0',\ldots,a_k'}$ to maximal \piros{affine $F$-independent} subsets $\set{a_0,\ldots,a_n}$
and $\set{a_0',\ldots,a_n'}$ of $\aspace F{F^n}$, respectively. Since $\set{a_0,\ldots,a_n}$
and $\set{a_0',\ldots,a_n'}$ are free generating sets of $\aspace F{F^n}$,  there is a (unique) automorphism $\psi$ of $\aspace F{F^n}$ such that $\psi(a_i)=a_i'$ for $i=0,\ldots, n$.

Let $x\in \rrps C$ be arbitrary. Then there are 
$\xi_0,\ldots,\xi_k\in \mathbb Q$ such that their sum equals 1 and  
\begin{equation}\label{cxaibLcB}
x=\xi_0a_0+\ldots+\xi_ka_k\text.
\end{equation}
\piros{Observe that $C$ and $C'$ are $\mathbb Q$-convex since they are $F$-convex. Hence}
we obtain from Lemma~\ref{fMlaLMam} and \piros{\eqref{cxaibLcB}} that 
$\keplet T{\xi_0,\ldots,\xi_k}{a_0,\ldots,a_k}x$ holds in $(C,\opset(T))$. Since $\phi$ is an isomorphism,  $\keplet T{\xi_0,\ldots,\xi_k}{a_0',\ldots,a_k'}{\phi(x)}$ holds in $(C',\opset(T))$. Using Lemma~\ref{fMlaLMam} again, we conclude that $\phi(x)=\xi_0a_0'+\ldots+\xi_ka_k'$. Therefore, \eqref{cxaibLcB} yields that 
$\psi(x)= \xi_0\psi(a_0)+\ldots+\xi_k\psi(a_k)=  \xi_0a_0'+\ldots+\xi_ka_k'=\phi(x)\in C'$. 
This gives that $\restrict{\psi}{\rrps{C}}= \restrict{\phi}{\rrps{C}}$ and  $\psi(x)\in\rrpv {C'}$. Therefore, $\psi\bigl(\rrps C\bigr)\subseteq \rrpv {C'}$. Working with $(\psi^{-1},\phi^{-1})$ instead of $(\psi,\phi)$, we obtain $\psi^{-1}\bigl(\rrpv {C'}\bigr)\subseteq \rrps {C}$ similarly. Thus, \eqref{psionDsEtoK} holds.

If $F=\mathbb Q$, then  \eqref{psionDsEtoK} together with $C=\rrps C$ and  $C'=\rrpv {C'}$ implies the validity of the theorem. 
\piros{Thus} we assume that \piros{at least one of} $C$ and $C'$ \piros{is} bounded. 
\piros{If, say, $C$ is bounded, then so is $\rrps C$. The automorphisms of $\aspace F{F^n}$ preserve this property, whence \eqref{psionDsEtoK} implies that $\rrpv {C'}$ is bounded. Since $\rrpv {C'}$ is dense in $C'$, we conclude that $C'$ is bounded. Therefore, in the rest of the proof, we assume that both $C$ and $C'$ are bounded.}

For $X\subseteq \mathbb R^n$, the topological closure of $X$, that is the set of cluster points of $X$, will be denoted by $\topclose{\mathbb R}X$. 
Let $\pontf C=\psi^{-1}(C')$. It is an $F$-convex subset of $F^n$ since the automorphisms  of $\aspace F{F^n}$ are also automorphisms of $(F^n,\opset(F))$.
By the same reason,  the restriction $\restrict{\psi^{-1}}{C'}$
is an isomorphism $(C',\opset(T))\to(\pontf C,\opset(T))$. Let $\gamma:=\restrict{\psi^{-1}}{C'}\circ \phi$ (we compose maps from right to left). Then, by \eqref{psionDsEtoK}, by $\gamma(a_i)=a_i$ for $0\leq i\leq n$, and by Lemma~\ref{siGrTv}, we know that 
\begin{equation}\label{soTgxML}
\begin{aligned}
&\gamma\colon (C,\opset(T))\to (\pontf C,\opset(T))\quad \text{is an isomorphism,}
\cr
&\rrps C=\rrps{\pontf C},\quad  \text{and}\quad 
\restrict \gamma{\rrps C}\text{ is the identical map,}
\cr
&C \subseteq V:=\aspan F{a_0,\ldots,a_k}\quad \text{and}\quad \pontf C\subseteq V\text.
\end{aligned}
\end{equation}
It suffices to show that $\gamma$ is the identical map; \piros{really}, then the desired $\phi=\restrict \psi C $ would follow by the definition of $\gamma$. 
For $y\in C$, \piros{the element} $\gamma(y)$ will often \piros{be} denoted by $\pontf y$. 
We have to show that $\pontf y=y$ for all $y\in C$. Since this is clear by \eqref{soTgxML}
if $y\in\rrps C$, we assume that 
\[\text{$y\in C\setminus\rrps C$.}\] 
Next, we deal with $C$ and $\pontf C$ simultaneously. Since they play a symmetric role,  we usually give the details only for $C$.

If $\vec b=(b_1,b_2,b_3, \ldots)\in \rrps C^\omega= \rrps {\pontf C}^\omega$, then $\vec b$ is called an $\rrps C$-sequence. Convergence (without adjective) is understood in the usual sense in $\mathbb R^n$. We use the notation $\lim_{j\to\infty}b_j=y$ to denote that $\vec b$ converges to $y$.  We say that $\vec b\,\,$ \emph{$(C,\opset(T))$-converges} to $y$, 
in notation $\vec b\toT C y$, if for each $j\in\mathbb N$,
\begin{equation}\label{convDef}
\text{there exist an } x_j\in C \text{ and a } q_j\in\opint(T) \text{ such that }q_j\leq 1/j
\text{ and } b_j= yx_j\mop{q_j}\text.
\end{equation}
In virtue of Lemma~\ref{barireveAl}, $\vec b\toT C y$ if{f}  
\begin{equation}\label{convvarDef}
\text{for each }j\in\mathbb N,
\text{ there is a } q_j\in\opint(T) \text{ such that }q_j\leq 1/j\text{ and } yb_j\mop{1/q_j}\in C\  \text.
\end{equation}
It \piros{follows} from \eqref{soTgxML} and  \eqref{convDef} that  for all $\vec b\in {\rrps C}^\omega$, 
\begin{equation}\label{trNsFercoNv}
\vec b\toT C y\quad\text{if{f}}\quad \piros{\vec b}    \toT {\pontf C} \pontf y\text.  
\end{equation}
For $X\subseteq \mathbb R^n$, let  $\diam X$ denote the \emph{diameter} $\sup\set{\distance uv: u,v\in X}$ of $X$. We know that
$\diam C<\infty$ and $\diam{\pontf C}<\infty$. 
Hence if $q_j\leq 1/j$, then Lemma~\ref{barireveAl} yields  that 
$ \distance y{b_j}= q_j\cdot \distance y{yb_j\mop{1/q_j}} \leq \diam C/j$.  Hence \eqref{convvarDef} gives that  for any $\rrps C$-sequence $\vec b$, 
\begin{equation}\label{CconvThNnconV}
\begin{aligned}
&\text{if}\quad  \vec b\toT C y,\quad\text{then}\quad \lim_{j\to \infty} b_j= y\text.\quad\text{Similarly,} 
\cr
&\text{if}\quad  \vec b\toT {\pontf C} \pontf y,\quad\text{then}\quad \lim_{j\to \infty} b_j=\pontf y\text.  
\end{aligned}
\end{equation}

Next, we intend to show that 
\begin{equation}\label{vanbSqv}
\text{there exists a }\rrps C\text{-sequence }\vec b\text{ such that }
\vec b\toT C y\text.
\end{equation}
Extend $\set y$ to a maximal \piros{affine $F$-independent} subset $\set{y,z_1,\ldots,z_k}$ of $C$. It follows from Lemma~\ref{siGrTv} that this set consists of $1+k$ elements, and $V$ equals $\aspan F {y,z_1,\ldots,z_k}$\piros{.}
For a \piros{given} $j\in\mathbb N$, choose a $q_j\in\opint(T)$ such that $q_j\leq 1/j$. For $i=1,\ldots,k$, let $u_i:=yz_i\mop{q_j}$. By the $F$-convexity of $C$, $u_i\in C$. 
Since $z_i=yu_i\mop{1/q_j}$ by Lemma~\ref{barireveAl},  $\set{y,u_1,\ldots,u_k}$ also $F$-spans $V$, whence it is \piros{affine $F$-independent by Lemma~\ref{siGrTv}\eqref{siGrTviii}}. \piros{Hence} $\cnvclose F {y,u_1,\ldots,u_k}\subseteq C$ is a (non-degenerate) $k$-dimensional simplex of $V$, so its interior (understood in $V$) is nonempty. Since $\rrps C$ is dense in $C$ and $\rrps C\subseteq C\subseteq V$, we can choose a point $b_j\in \cnvclose F {y,u_1,\ldots,u_k}$. By \eqref{HoWgEnre}, $b_j$ is of the form $yu_1\ldots u_k\,\ttau$. Let $x_{\piros j}:=yz_1\ldots z_k\,\ttau\in C$. 
Using that $\moop{q_j}$ commutes with $\ttau$ and the \piros{terms} are idempotent, we have that 
\begin{align*}
yx_{\piros j} \mop{q_j}&=y( yz_1\ldots z_k\,\ttau) \mop{q_j} = (yy\ldots y\,\ttau) ( yz_1\ldots z_k\,\ttau)\mop{q_j} 
\cr
&=(yy\mop{q_j})(yz_1\mop{q_j})\ldots(yz_k\mop{q_j})\ttau = yu_1\ldots u_k\ttau=b_j\text.
\end{align*}
(Notice that the parentheses above can be omitted.) Therefore, the sequence $\vec b=(b_1,b_2,\ldots)$ proves \eqref{vanbSqv}.

Finally, it follows from \eqref{vanbSqv}, \eqref{trNsFercoNv} and \eqref{CconvThNnconV} that $\pontf y=y$. \piros{Therefore, $\gamma$ is the identical map.}
\end{proof}

\begin{proof}[Proof of Corollary~\ref{coroLegy}] As we have already mentioned, with reference to \cite{RS02}, $(F^n,\hoop)$ is term equivalent to $(F^n,\opset(\mathbb D))$. Hence the first part of the statement is clear.

To prove the second part, assume that $D,D'$ are \piros{isomorphic subalgebras} of \piros{$(\mathbb Q^n,\hoop)$ such that}  $D'$ is \piros{a} geometric \piros{subset of $\mathbb Q^n$}. Then  there is an isomorphism $\phi\colon (D,\opset(\mathbb D))\to  (D',\piros{\opset}(\mathbb D))$ by the already mentioned term equivalence, and $D'$ is $\mathbb Q$-convex by Proposition~\ref{vknxLl}\piros{\eqref{vknxLlii}}. We have to show that $D$ is $\mathbb Q$-convex. Let $a,b\in D$ and $q\in \opint(\mathbb Q)$. It is clear from  %\eqref{FRmulkPa} 
\piros{Lemma~\ref{fMlaLMam}} that 
\begin{align*}
ab\qop\in D &\iff
(\exists y)\, \keplet {\mathbb D}{1-q,\,q}{a,b}y \text{ holds in }(D,\opset(\mathbb D))\cr
&\iff 
(\exists y')\, \keplet {\mathbb D}{1-q,\,q}{\phi(a),\phi(b)}{y'} \text{ holds in }(D',\opset(\mathbb D)),
\end{align*}
and this last condition holds since $D'$ is $\mathbb Q$-convex.
\end{proof}

%
%Font for figures : 
%
%\newpage
%$y,b:=yx\qop, x=yb\mop{1/q}, 
%b_1=yx_1\qop, x_1:=yb_1\mop{1/q}$
%
%$x, y, z, b, b_1, e, \geq 0, S, S_y, \tline Fyb$ 
%
%\[ a_i,a_j,S_i,S_j,d_i,d_j,c, b=\xi_0a_0+\cdots+\xi_ka_k\]
%$L=\tline{\mathbb Q}bc$, $L'=\tline{\mathbb Q}{a_i}b$ $S, w, C, d_j=a_i, S $
%
%
%\[x_0=u_0,u_1,u_2,u_3,x_1=u_4,u_5,y=u_6
%\]
%
%\[x=a_0a_1\mop{u/v} , 
%z=a_0a_1\mop{1/v},    
%a_1=a_0z\mop v, x=a_0z\mop u
%\]
%
%
%\begin{align*}
%b=yx\pop \iff x=yb\mop{1/p}\iff y=bx\,\mop{p/(p-1)}\text.
%\end{align*}
%\newpage 


\begin{thebibliography}{99}



\bibitem{rBurSan}
S.\ Burris and H.\ P.\ Sankappanavar: \emph{A Course in Universal Algebra}, 
Graduate Texts in Mathematics, 78.\  Springer-Verlag, New York--Berlin, 1981;
The Millennium Edition, http://www.math.uwaterloo.ca/$\sim$snburris/htdocs/ualg.html .


\bibitem{rczgaromanI}
G.\ Cz\'edli and A.\ B.\ Romanowska:
Some modes with new algebraic closures.
Algebra Universalis, \piros{to appear}.

\bibitem{rczgaroman2}
\piros{G.\ Cz\'edli and A.\ B.\ Romanowska:
Generalized convexity and closure conditions, manuscript.}



\bibitem{JK283}
J. Je\v{z}ek and T. Kepka, {\it Medial Groupoids,} Academia, Praha,
1983.



\bibitem{MR04}
K. Matczak and A. Romanowska, {\it Quasivarieties of cancellative
commutative binary modes,} Studia Logica \textbf{78} (2004),
321--335.
%

\bibitem{MatRomSm:dyadpolyg}
K. Matczak, A. B. Romanowska, J. D. H. Smith, {\it Dyadic
polygons,} International Journal of Algebra and Computation, 21 (2011), 387–408. DOI:10.1142/80218196711006248



\bibitem{PRS03} K. Pszczo\l a, A. Romanowska and J. D. H. Smith,
{\it Duality for some free modes,} Discuss. Math. General Algebra
and Appl. \textbf{23} (2003), 45--62.

\bibitem{RS85}
A.B. Romanowska and J.D.H. Smith, {\it Modal Theory,} Heldermann,
Berlin, 1985.



\bibitem{RS91}
A.B. Romanowska and J.D.H. Smith, {\it On the structure of
semilattice sums,} Czechoslovak Math. J. \textbf{41} (1991), 24--43.


\bibitem{RS02}
A.B. Romanowska and J.D.H. Smith, {\it Modes,} World Scientific,
Singapore, 2002.


\end{thebibliography}
\end{document}
\bye